\newtheorem{theorem}{Theorem}
\newtheorem{corollary}{Corollary}
\newtheorem{definition}{Definition}
\newtheorem{lemma}{Lemma}
\newtheorem{remark}{Remark}
\newtheorem{proposition}{Proposition}
\numberwithin{equation}{section}
\begin{document}
\title[Growth of $(\alpha ,\beta ,\gamma )$-order solutions of linear
differential equations]{Growth of $(\alpha ,\beta ,\gamma )$-order solutions
of linear differential equations with analytic coefficients in the unit disc}
\author[A. H. Arrouche and B. Bela\"{\i}di]{Amina Halima Arrouche and
Benharrat Bela\"{\i}di$^{\star}$}
\address{A. H. Arrouche: Department of Mathematics, Laboratory of Pure and
Applied Mathematics, University of Mostaganem (UMAB), B. P. 227
Mostaganem-(Algeria).}
\email{aminahalima.arrouche@etu.univ-mosta.dz}
\address{B. Bela\"{\i}di $^{\star}$ Corresponding author: Department of
Mathematics, Laboratory of Pure and Applied Mathematics, University of
Mostaganem (UMAB), B. P. 227 Mostaganem-(Algeria).}
\email{benharrat.belaidi@univ-mosta.dz}
\keywords{Differential equations, $(\alpha ,\beta ,\gamma )$-order, $(\alpha
,\beta ,\gamma )$-type, growth of solutions, unit disc.\\
{\small AMS Subject Classification\ }$(2020)${\small : }30D35, 34M10.}

\begin{abstract}
In this paper, we study the growth of solutions to higher-order complex
linear differential equations in the unit disc, where the analytic
coefficients are of finite $(\alpha ,\beta ,\gamma )$-order. By employing
the concepts of $(\alpha ,\beta ,\gamma )$-order and $(\alpha ,\beta ,\gamma
)$-type, we establish new results concerning the growth of such solutions.
These results extend and generalize previous work by the second author and
by Biswas.
\end{abstract}

\maketitle

\section{Introduction and Definitions}

\noindent Throughout this paper, we assume that the reader is familiar with
the basic concepts, notation, and fundamental results of Nevanlinna theory
in the complex plane and in the unit disc $\Delta =\left\{ z\in \mathbb{C}%
:\left\vert z\right\vert <1\right\} ,$ see for example, \cite%
{17,19,20,24,25,31,33}.

\noindent \qquad For $k\geq 2,$ we consider the complex linear differential
equation
\begin{equation}
f^{(k)}+A_{k-1}(z)f^{(k-1)}+\cdots +A_{0}(z)f=0,  \label{1.1}
\end{equation}%
where the coefficients $A_{j}$ ($j=0,1,\dots ,k-1$) are analytic in the unit
disc $\Delta .$ It is well known that every solution of (\ref{1.1}) is
analytic in $\Delta $, and that the equation possesses exactly $k$ linearly
independent solutions (see e.g. \cite{20}). The study of growth and
oscillation of solutions of complex linear differential equations in the
unit disc has developed rapidly since the 1980s, see \cite{28}. A systematic
investigation in this direction was initiated by Heittokangas \cite{20}, who
introduced suitable function spaces to describe the growth of solutions when
the coefficients $A_{j}$ ($j=0,1,\dots ,k-1$) of (\ref{1.1}) are analytic
functions in $\Delta $.

Subsequently, Heittokangas et al. \cite{22} investigated the iterated $p$%
-order of solutions of equation (\ref{1.1}), for $k\geq 2,$ assuming that
the coefficient $A_{0}$ is dominant. Their result shows that the growth of
every non-trivial solution is completely determined by the iterated $p$%
-order of $A_{0}$.

\begin{theorem}
\label{teo1.1}(\cite{22}) Let $p\in \mathbb{N}$. If the coefficients $%
A_{0},A_{1},\ldots ,A_{k-1}$ are analytic functions in $\Delta $ such that $%
\rho _{M,p}\left( A_{j}\right) <\rho _{M,p}\left( A_{0}\right) $ for $%
j=1,...,k-1,$ then every solution $f\not\equiv 0$ of $(\ref{1.1})$ satisfies
$\rho _{M,p+1}\left( f\right) =\rho _{M,p}\left( A_{0}\right) .$
\end{theorem}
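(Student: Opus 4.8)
The plan is to establish the two one-sided bounds $\rho_{M,p+1}(f)\le \rho_{M,p}(A_0)$ and $\rho_{M,p+1}(f)\ge \rho_{M,p}(A_0)$ separately for an arbitrary nontrivial solution $f$ of \eqref{1.1}; the first is routine, while the second (where the dominance hypothesis on $A_0$ is essential) is the substantive part.

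For the upper bound I would invoke the general growth lemma for \eqref{1.1} in the disc: if $\max_{0\le j\le k-1}\rho_{M,p}(A_j)\le \sigma$, then every solution $f$ of \eqref{1.1} satisfies $\rho_{M,p+1}(f)\le \sigma$. This is proved by a Wiman--Valiron/integration argument: from $f^{(k)}=-A_{k-1}f^{(k-1)}-\cdots-A_0 f$ one has, on $|z|=r$,
\[
|f^{(k)}(z)|\le \Big(\max_{0\le j\le k-1}M(r,A_j)\Big)\big(|f^{(k-1)}(z)|+\cdots+|f(z)|\big),
\]
and integrating the associated first-order linear system along a radial segment to $z$ yields $\log M(r,f)\le C_1+C_2\max_{0\le j\le k-1}M(r,A_j)$ for $r$ near $1$; iterating the logarithm then gives $\rho_{M,p+1}(f)\le \sigma$. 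Since by hypothesis $\rho_{M,p}(A_j)<\rho_{M,p}(A_0)$ for $j=1,\dots,k-1$, the choice $\sigma=\rho_{M,p}(A_0)$ yields $\rho_{M,p+1}(f)\le \rho_{M,p}(A_0)$.

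For the lower bound I would argue by contradiction, assuming $\rho_{M,p+1}(f)=\beta<\rho_{M,p}(A_0)$ (so $\beta<\infty$ by the upper bound). Rewriting \eqref{1.1} and taking moduli gives, on $|z|=r$,
\[
|A_0(z)|\le \left|\frac{f^{(k)}(z)}{f(z)}\right|+\sum_{j=1}^{k-1}|A_j(z)|\left|\frac{f^{(j)}(z)}{f(z)}\right|.
\]
I would evaluate this at a point $z$ with $|z|=r$ and $|A_0(z)|=M(r,A_0)$, and estimate each logarithmic derivative by the unit-disc version of the logarithmic derivative lemma: there is an exceptional set $E\subset[0,1)$ of finite logarithmic measure, $\int_E\frac{dr}{1-r}<\infty$, such that for $r\notin E$ and the comparison radius $s=(1+r)/2$ one has
\[
\left|\frac{f^{(j)}(z)}{f(z)}\right|\le C\left(\frac{1}{1-r}\,T(s,f)\right)^{C},\qquad |z|=r,\ 1\le j\le k,
\]
and then use $T(s,f)\le \log^+M(s,f)$ together with $1-s$ being comparable to $1-r$. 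Feeding in $\rho_{M,p+1}(f)=\beta$ and $\log_p^+M(r,A_j)\le (1-r)^{-\rho_j-\varepsilon}$ for $r$ near $1$, where $\rho_j=\rho_{M,p}(A_j)$, the displayed inequality forces
\[
\log_p^+M(r,A_0)\le (1-r)^{-\mu-\varepsilon},\qquad \mu:=\max\Big\{\beta,\ \max_{1\le j\le k-1}\rho_j\Big\},
\]
for all $r\notin E$ sufficiently close to $1$. Absorbing the exceptional set via the monotonicity of $r\mapsto M(r,A_0)$ then gives $\rho_{M,p}(A_0)\le \mu$; since $\beta<\rho_{M,p}(A_0)$ and each $\rho_j<\rho_{M,p}(A_0)$, this is a contradiction, so $\rho_{M,p+1}(f)\ge \rho_{M,p}(A_0)$.

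I expect the main obstacle to be the bookkeeping in the lower-bound step: one must have at hand a pointwise logarithmic derivative estimate in $\Delta$ that is valid up to an exceptional set of finite logarithmic measure and sharp enough that the polynomial factor in $T(s,f)$ does not raise the iterated order of $f$ (for $p\ge 1$ the extra logarithm absorbs it, but the estimate has to be of the stated polynomial type); one must keep track of the iterated logarithms so that the passage $T(s,f)\to\log^+M(s,f)$ and the change of radius $r\to s=(1+r)/2$ preserve $\rho_{M,p+1}(f)$; and one must remove the exceptional set $E$, using that for every $r$ close to $1$ there exists $r'\in[r,1)\setminus E$ with $1-r'$ comparable to $1-r$, in order to upgrade the off-$E$ estimate to a genuine $\limsup$ bound on $\log_{p+1}^+M(r,A_0)\big/(-\log(1-r))$.
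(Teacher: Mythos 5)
Your proposal is correct and follows the same route as the source: the paper quotes Theorem \ref{teo1.1} from \cite{22} without reproving it, but its own proofs of the generalizations (Theorems \ref{teo2.1} and \ref{teo2.2}) use exactly your two-sided strategy --- the upper bound via the Gronwall-type growth estimate for the associated first-order system (Lemma \ref{lem3.8}, packaged as Lemma \ref{lem3.9}), and the lower bound by contradiction from $|A_0|\le |f^{(k)}/f|+\sum_{j}|A_j|\,|f^{(j)}/f|$ combined with the Chyzhykov--Gundersen--Heittokangas pointwise logarithmic-derivative estimate (Lemma \ref{lem3.1}) and removal of the finite-logarithmic-measure exceptional set (Lemma \ref{lem3.4}). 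The only point to tidy is that the evaluation point where $|A_0(z)|=M(r,A_0)$ could be a zero of $f$; this is handled by a standard perturbation of the evaluation point and is glossed over in the paper's own arguments as well.
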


Observe that $A_{0}(z)$ is the only one dominant coefficient. Later, Hamouda
\cite{18}, extended Theorem \ref{teo1.1} by allowing more than one dominant
coefficient, replacing strict dominance by suitable conditions involving
iterated $p$-order and iterated $p$-type. He proved the following theorem.

\begin{theorem}
\label{teo1.2}(\cite{18}) Let $p\in \mathbb{N}$. If the coefficients $%
A_{0},A_{1},\ldots ,A_{k-1}$ are analytic functions in $\Delta $ such that%
\begin{equation*}
\rho _{M,p}\left( A_{j}\right) \leq \rho _{M,p}\left( A_{0}\right) <\infty ,%
\text{ }j=1,...,k-1,
\end{equation*}%
and
\begin{equation*}
\max \left\{ \tau _{M,p}\left( A_{j}\right) :\rho _{M,p}\left( A_{j}\right)
=\rho _{M,p}\left( A_{0}\right) >0\right\} <\tau _{M,p}\left( A_{0}\right) ,
\end{equation*}%
then every solution $f\not\equiv 0$ of $(\ref{1.1})$ satisfies $\rho
_{M,p+1}\left( f\right) =\rho _{M,p}\left( A_{0}\right) .$
\end{theorem}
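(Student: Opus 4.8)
The plan is to establish the two inequalities $\rho_{M,p+1}(f)\le\rho_{0}$ and $\rho_{M,p+1}(f)\ge\rho_{0}$ separately, where throughout I write $\rho_{0}=\rho_{M,p}(A_{0})$, $\tau_{0}=\tau_{M,p}(A_{0})$, and $\exp_{p}$, $\log^{+}_{p}$ for the $p$-fold iterates of $\exp$ and $\log^{+}$. The upper bound I would obtain from the standard growth estimate for solutions of equation (\ref{1.1}) with analytic coefficients: if $\max_{0\le j\le k-1}\rho_{M,p}(A_{j})\le\rho_{0}<\infty$, then every solution satisfies $\rho_{M,p+1}(f)\le\rho_{0}$ (the usual bound relating the growth of a solution to that of the coefficients, via a Gronwall-type comparison for $\log M(r,f)$). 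Since the hypothesis gives $\rho_{M,p}(A_{j})\le\rho_{0}$ for all $j$, this yields $\rho_{M,p+1}(f)\le\rho_{0}<\infty$, which in particular makes the logarithmic-derivative estimate in the disc applicable to $f$. If $\rho_{0}=0$ the proof is already complete since $\rho_{M,p+1}(f)\ge 0$ trivially; so assume $\rho_{0}>0$ from now on.

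For the reverse inequality I would argue by contradiction, assuming $\sigma:=\rho_{M,p+1}(f)<\rho_{0}$ and fixing a small $\varepsilon>0$ to be pinned down at the end. Rewriting (\ref{1.1}) as $-A_{0}=f^{(k)}/f+\sum_{j=1}^{k-1}A_{j}\,(f^{(j)}/f)$ and evaluating at a point $z_{r}$ with $|z_{r}|=r$ and $|A_{0}(z_{r})|=M(r,A_{0})$, one gets
\begin{equation*}
M(r,A_{0})\le\Bigl|\frac{f^{(k)}}{f}(z_{r})\Bigr|+\sum_{j=1}^{k-1}|A_{j}(z_{r})|\,\Bigl|\frac{f^{(j)}}{f}(z_{r})\Bigr|.
\end{equation*}
Put $J=\{\,j:1\le j\le k-1,\ \rho_{M,p}(A_{j})=\rho_{0}\,\}$. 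If $J=\varnothing$, the hypothesis forces $\rho_{M,p}(A_{j})<\rho_{0}$ for $1\le j\le k-1$, which is exactly the hypothesis of Theorem \ref{teo1.1}, and the conclusion follows from there; hence assume $J\ne\varnothing$. Then $\tau^{\ast}:=\max_{j\in J}\tau_{M,p}(A_{j})<\tau_{0}$, and since $\tau_{M,p}(A_{j})\ge 0$ we also have $0\le\tau^{\ast}<\tau_{0}\le\infty$. Into the displayed inequality I would then substitute the following, all valid for $|z|=r\notin E$ and $r$ near $1$, where $E\subset[0,1)$ has finite logarithmic measure $\int_{E}\frac{dr}{1-r}<\infty$: (i) the logarithmic-derivative estimate in $\Delta$, $|f^{(j)}(z)/f(z)|\le\exp_{p}\{(1-r)^{-(\sigma+\varepsilon)}\}$ for $1\le j\le k$; (ii) $|A_{j}(z)|\le M(r,A_{j})\le\exp_{p}\{(\tau^{\ast}+\varepsilon)(1-r)^{-\rho_{0}}\}$ for $j\in J$, by the definition of $(M,p)$-type; and (iii) $|A_{j}(z)|\le\exp_{p}\{(1-r)^{-(\beta+\varepsilon)}\}$ for the remaining indices, where $\beta:=\max\{\rho_{M,p}(A_{j}):1\le j\le k-1,\,j\notin J\}<\rho_{0}$.

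Plugging (i)--(iii) into the last display and applying $\log^{+}_{p}$ — using that the iterated logarithm is sub-additive up to a bounded additive error, and that the level-$p$ contributions carrying exponents $\sigma+\varepsilon<\rho_{0}$ and $\beta+\varepsilon<\rho_{0}$ are absorbed into an $\varepsilon(1-r)^{-\rho_{0}}$ error — one reaches
\begin{equation*}
\log^{+}_{p}M(r,A_{0})\le(\tau^{\ast}+2\varepsilon)\left(\frac{1}{1-r}\right)^{\rho_{0}}\qquad(r\notin E,\ r\to 1^{-}).
\end{equation*}
On the other hand, $\tau_{M,p}(A_{0})=\tau_{0}$ provides a sequence $r_{n}\to 1^{-}$ with $\log^{+}_{p}M(r_{n},A_{0})\ge(\tau_{0}-\varepsilon)(1-r_{n})^{-\rho_{0}}$ (the right-hand side being arbitrarily large if $\tau_{0}=\infty$). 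These $r_{n}$ need not avoid $E$, but $r\mapsto M(r,A_{0})$ is nondecreasing and $E$ has finite logarithmic measure, so for $n$ large the interval $(r_{n},r_{n}+\eta(1-r_{n}))$ meets $[0,1)\setminus E$; replacing $r_{n}$ by such a point $r_{n}'$ and choosing $\eta>0$ small keeps $\log^{+}_{p}M(r_{n}',A_{0})\ge(\tau_{0}-2\varepsilon)(1-r_{n}')^{-\rho_{0}}$. Comparing the two estimates at $r=r_{n}'$ forces $\tau_{0}-2\varepsilon\le\tau^{\ast}+2\varepsilon$, i.e. $\tau_{0}-\tau^{\ast}\le 4\varepsilon$, which is impossible once $\varepsilon<(\tau_{0}-\tau^{\ast})/4$. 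Hence $\rho_{M,p+1}(f)\ge\rho_{0}$, and together with the upper bound $\rho_{M,p+1}(f)=\rho_{0}$.

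The main obstacle is exactly what is compressed into the previous paragraph: carrying the estimates through the $p$-fold iterated exponential and logarithm so that the \emph{strict} inequality $\tau^{\ast}<\tau_{0}$ survives. One has to check that the logarithmic-derivative factor $\exp_{p}\{(1-r)^{-(\sigma+\varepsilon)}\}$ and the subdominant coefficients — which sit at the same iteration level $p$ as the dominant ones — leave only a lower-order residue after $\log^{+}_{p}$ is applied, and this is precisely where the already-proved upper bound $\sigma<\rho_{0}$ together with $\beta<\rho_{0}$ is used. A secondary, but unavoidable, complication is the removal of the exceptional set $E$, which forces one to work along a sequence extracted from the $\limsup$ defining $\tau_{M,p}(A_{0})$ instead of on a full boundary neighbourhood. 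Finally, the degenerate case $\rho_{0}=0$ is settled by the upper bound alone, and the case $\tau_{0}=\infty$ is subsumed in the argument above, the contradiction there being immediate.
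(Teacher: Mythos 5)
Theorem \ref{teo1.2} is quoted from Hamouda \cite{18} and is not proved in this paper, so the only internal point of comparison is the proof of its generalization, Theorem \ref{teo2.2}. Your argument is correct and follows essentially that same route (upper bound for $\rho_{M,p+1}(f)$ from the standard coefficient growth estimate, lower bound by isolating $A_0$ in (\ref{1.1}), estimating the logarithmic derivatives and the non-dominant coefficients, and contradicting the strict type inequality), the only notable difference being how the exceptional set is avoided: you perturb the sequence realizing $\tau_{M,p}(A_0)$, whereas the paper extracts a set of infinite logarithmic measure via the analogue of Lemma \ref{lem3.7}.
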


Various further extensions of Theorem \ref{teo1.1} and Theorem \ref{teo1.2}
were obtained using the notion of $[p,q]$-order, see for example, \cite%
{2,3,4,26,32}. However, these growth indicators are not sufficient to
describe arbitrary growth behaviour. In fact, it was shown in \cite[Example
1.4]{16}, that for every $p\in
\mathbb{N}
$ there exist functions whose iterated $p$-order and $[p,q]$-order are both
infinite. To overcome this limitation, Chyzhykov and Semochko \cite{16}
introduced the concept of $\varphi $-order, which provides a more flexible
scale for measuring growth, see also \cite{29}. Using this notion, Semochko
\cite{29} obtained a result that improves the above-mentioned theorem of
Heittokangas et al. by relaxing the dominance condition on the coefficients.

Let $\varphi $ be an increasing unbounded function on $\left( 0,+\infty
\right) .$ The $\varphi $-orders of an analytic function $f$ in $\Delta $
are defined by (\cite{29})
\begin{equation*}
\tilde{\rho}_{\varphi }^{0}(f)=\underset{r\longrightarrow 1^{-}}{\lim \sup }%
\dfrac{\varphi (M(r,f))}{-\log (1-r)},\qquad \tilde{\rho}_{\varphi }^{1}(f)=%
\underset{r\longrightarrow 1^{-}}{\lim \sup }\dfrac{\varphi (\log M(r,f))}{%
-\log (1-r)}.
\end{equation*}%
If $f$ is meromorphic in $\Delta ,$ then the $\varphi $-orders are defined
by
\begin{equation*}
\rho _{\varphi }^{0}(f)=\underset{r\longrightarrow 1^{-}}{\lim \sup }\dfrac{%
\varphi (e^{T(r,f)})}{-\log (1-r)},\qquad \rho _{\varphi }^{1}(f)=\underset{%
r\longrightarrow 1^{-}}{\lim \sup }\dfrac{\varphi (T(r,f))}{-\log (1-r)}.
\end{equation*}

Note that for $\varphi (r)=\log _{p+1}^{+}r,\ p\in \mathbb{N}$ and if $f$ is
an analytic function in $\Delta ,$ then
\begin{equation*}
\tilde{\rho}_{\varphi }^{0}(f)=\rho _{M,p}(f)\text{ and }\tilde{\rho}%
_{\varphi }^{1}(f)=\rho _{M,p+1}(f).
\end{equation*}%
The following theorem due to Semochko \cite{29} used the concept of $\varphi
$-order which improves Theorem \ref{teo1.1}.

\begin{theorem}
\label{teo1.3} (\cite{29}) Let $\varphi \in \Phi $ and $A_{0},A_{1},\ldots
,A_{k-1}$ be analytic functions in $\Delta $ such that $\max \{\tilde{\rho}%
_{\varphi }^{0}(A_{j}),j=1,\ldots ,k-1\}<\tilde{\rho}_{\varphi }^{0}(A_{0}).$
Then, every solution $f\not\equiv 0$ of $(\ref{1.1})$ satisfies $\tilde{\rho}%
_{\varphi }^{1}(f)=\tilde{\rho}_{\varphi }^{0}(A_{0}).$
\end{theorem}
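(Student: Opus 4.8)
The plan is to prove the two inequalities $\tilde{\rho}_{\varphi}^{1}(f)\leq\tilde{\rho}_{\varphi}^{0}(A_{0})$ and $\tilde{\rho}_{\varphi}^{1}(f)\geq\tilde{\rho}_{\varphi}^{0}(A_{0})$ separately. Write $\alpha:=\tilde{\rho}_{\varphi}^{0}(A_{0})$ and $\beta:=\max\{\tilde{\rho}_{\varphi}^{0}(A_{j}):1\leq j\leq k-1\}$, so $\beta<\alpha$. The only features of the class $\Phi$ I would invoke are its regularity properties, in the form $\varphi(t^{c})=(1+o(1))\varphi(t)$ as $t\to\infty$ for every fixed $c>0$ (which also gives $\varphi(c_{2}t+c_{1})=(1+o(1))\varphi(t)$, $\varphi(ab)\leq(1+o(1))(\varphi(a)+\varphi(b))$, and $\varphi(x)=o(\log x)$). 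For the upper bound I would recast $(\ref{1.1})$ as a first-order system for $(f,f',\dots,f^{(k-1)})$ and integrate the resulting differential inequality along rays, obtaining a constant $c_{2}>0$ with $\log M(r,f)\leq c_{1}+c_{2}\max_{0\leq j\leq k-1}M(r,A_{j})$ for $r$ near $1$. Since $\beta<\alpha$ we have $\max_{0\leq j\leq k-1}\tilde{\rho}_{\varphi}^{0}(A_{j})=\alpha$, so $M(r,A_{j})\leq\varphi^{-1}((\alpha+\varepsilon)(-\log(1-r)))$ for all $j$ and all small $\varepsilon>0$ once $r$ is close to $1$; applying $\varphi$ and using the regularity above gives $\varphi(\log^{+}M(r,f))\leq(1+o(1))(\alpha+\varepsilon)(-\log(1-r))$, hence $\tilde{\rho}_{\varphi}^{1}(f)\leq\alpha+\varepsilon$, and finally $\tilde{\rho}_{\varphi}^{1}(f)\leq\alpha$.

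For the lower bound I would rewrite $(\ref{1.1})$ as
\[
-A_{0}(z)=\frac{f^{(k)}(z)}{f(z)}+A_{k-1}(z)\frac{f^{(k-1)}(z)}{f(z)}+\cdots+A_{1}(z)\frac{f'(z)}{f(z)},
\]
which holds wherever $f(z)\neq0$. By the standard pointwise estimate for logarithmic derivatives in $\Delta$ there are $d\in(0,1)$, $C>0$ and a set $E\subset[0,1)$ with $\int_{E}\frac{dr}{1-r}<\infty$ such that, writing $B(r)$ for a fixed power of $\frac{1}{1-r}$ times $\max\{\log\frac{1}{1-r},\,T(1-d(1-r),f)\}$, one has $|f^{(j)}(z)/f(z)|\leq B(r)^{j}$ for $j=1,\dots,k$ and all $|z|=r\in[0,1)\setminus E$. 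Choose a sequence $r_{n}\to1^{-}$ with $r_{n}\notin E$ (possible because $E$ has finite logarithmic measure) along which $\varphi(M(r_{n},A_{0}))/(-\log(1-r_{n}))\to\alpha$, and for each $n$ pick $z_{n}$ with $|z_{n}|=r_{n}$ and $|A_{0}(z_{n})|=M(r_{n},A_{0})$; a small perturbation of $r_{n}$ lets us assume $f(z_{n})\neq0$. Fix $\varepsilon\in(0,\tfrac{\alpha-\beta}{2})$. For all large $n$ we then have simultaneously $M(r_{n},A_{0})>\varphi^{-1}((\alpha-\varepsilon)L_{n})$ and $|A_{j}(z_{n})|\leq M(r_{n},A_{j})\leq\varphi^{-1}((\beta+\varepsilon)L_{n})$ for $j\geq1$, where $L_{n}:=-\log(1-r_{n})$; evaluating the rewritten equation at $z_{n}$ and using $B(r_{n})\geq1$ gives
\[
\varphi^{-1}((\alpha-\varepsilon)L_{n})<M(r_{n},A_{0})\leq k\,B(r_{n})^{k-1}\bigl(B(r_{n})+\varphi^{-1}((\beta+\varepsilon)L_{n})\bigr).
\]

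Now I would split into two cases. If $B(r_{n})\leq\varphi^{-1}((\beta+\varepsilon)L_{n})$ for all large $n$, the displayed inequality gives $\varphi^{-1}((\alpha-\varepsilon)L_{n})<2k\,\varphi^{-1}((\beta+\varepsilon)L_{n})^{k}$; applying $\varphi$ and using $\varphi(2kt^{k})=(1+o(1))\varphi(t)$ yields $(\alpha-\varepsilon)L_{n}<(1+o(1))(\beta+\varepsilon)L_{n}$, hence $\alpha-\varepsilon\leq\beta+\varepsilon$, contradicting $\varepsilon<\tfrac{\alpha-\beta}{2}$. Therefore $B(r_{n})>\varphi^{-1}((\beta+\varepsilon)L_{n})$ for infinitely many $n$, and along that subsequence $M(r_{n},A_{0})\leq2k\,B(r_{n})^{k}$. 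Applying $\varphi$, using $\varphi(2kB(r_{n})^{k})=(1+o(1))\varphi(B(r_{n}))$, and then peeling off the contributions of the power of $\frac{1}{1-r_{n}}$ and of $\log\frac{1}{1-r_{n}}$ inside $B(r_{n})$ — each of which contributes only $o(L_{n})$ after applying $\varphi$, since $\varphi(x)=o(\log x)$ — leaves $\varphi(T(1-d(1-r_{n}),f))\geq(1+o(1))(\alpha-\varepsilon)L_{n}$. Since $T(\rho,f)\leq\log^{+}M(\rho,f)$ and $-\log(1-(1-d(1-r_{n})))=(1+o(1))L_{n}$, this forces $\tilde{\rho}_{\varphi}^{1}(f)\geq\alpha-\varepsilon$; letting $\varepsilon\to0$ gives $\tilde{\rho}_{\varphi}^{1}(f)\geq\alpha$, and combining with the upper bound completes the proof.

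The step I expect to be the main obstacle is the passage between the multiplicative estimates and the additive scale obtained after applying $\varphi$: every place where $B(r_{n})^{k}$, the polynomial factor in $\frac{1}{1-r_{n}}$, or the dominated quantity $\varphi^{-1}((\beta+\varepsilon)L_{n})$ is absorbed relies on the precise regularity of $\Phi$, and one must verify that these approximations can be carried out with enough precision to preserve the strict gap $\alpha-\beta$. A secondary, routine point is arranging that the maximizing sequence $r_{n}$ lies outside the exceptional set $E$ and off the zeros of $f$, which is handled using that $E$ has finite logarithmic measure and that $f$ has only isolated zeros in $\Delta$.
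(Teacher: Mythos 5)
Your overall strategy is the right one and is essentially the strategy this paper uses for its own generalizations of the quoted theorem (Theorems \ref{teo2.1}--\ref{teo2.2}): the upper bound via the Heittokangas--Korhonen--R\"att\"y\"a pointwise growth estimate for solutions (Lemma \ref{lem3.8} here, giving $\log M(r,f)\leq O(1)+C\max_{j}M(r,A_{j})$ and hence $\tilde{\rho}_{\varphi }^{1}(f)\leq \tilde{\rho}_{\varphi }^{0}(A_{0})$ after applying $\varphi$ and its regularity), and the lower bound by isolating $A_{0}$ in the equation and dominating the other terms using the Chyzhykov--Gundersen--Heittokangas logarithmic-derivative estimate (Lemma \ref{lem3.1}). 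Your case analysis on whether $B(r_{n})$ exceeds $\varphi ^{-1}((\beta +\varepsilon )L_{n})$, and the subsequent absorption of the polynomial factors in $\frac{1}{1-r_{n}}$ using $\varphi (x)=o(\log x)$ and $\varphi (t^{c})=(1+o(1))\varphi (t)$, are sound and correctly preserve the gap between the two orders.

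There is, however, one genuine gap, precisely at the point you dismiss as routine: the selection of the sequence $r_{n}\to 1^{-}$ lying outside the exceptional set $E$ \emph{and} satisfying $\varphi (M(r_{n},A_{0}))>(\tilde{\rho}_{\varphi }^{0}(A_{0})-\varepsilon )(-\log (1-r_{n}))$. Finiteness of the logarithmic measure of $E$ only guarantees that $[0,1)\setminus E$ accumulates at $1$; it does not guarantee that it meets the super-level set of $A_{0}$, which a priori could be a sparse set of zero logarithmic measure entirely contained in $E$. The standard repair --- and the reason the paper needs Lemma \ref{lem3.6} --- is to use the monotonicity of $r\mapsto M(r,A_{0})$ to thicken each near-maximizing radius $r_{m}$ into an interval $[r_{m},1-(1-\frac{1}{m})(1-r_{m})]$ on which the lower bound for $M(r,A_{0})$ persists (because $-\log (1-r)$ changes only by $\log \frac{m}{m-1}$ there); the union of these intervals has infinite logarithmic measure, so it cannot be exhausted by $E$. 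Without this thickening argument (or an alternative such as Bank's lemma, Lemma \ref{lem3.4}, to remove the exceptional set at the cost of replacing $r$ by $s(r)$), your lower-bound argument does not get started. The fix is standard and short, but it is a real missing step, not a consequence of $m_{l}(E)<\infty $ alone.
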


The generalized $(\alpha ,\beta )$-order of an entire function was
introduced by Sheremeta \cite{30}, and has been studied extensively in
recent years; see, for instance, \cite{10,11}. Applications of this concept
to differential equations were initiated by Mulyava et al. \cite{27} applied
the $(\alpha ,\beta )$-order to the study of solutions of certain
second-order heterogeneous differential equations, obtaining several
remarkable results. For details on the $(\alpha ,\beta )$-order, we refer to
\cite{5,7,27,30}.

Motivated by these developments, we study equation (\ref{1.1}) within a more
general growth framework. The main purpose of the present paper is to
investigate the growth of non-trivial solutions of equation (\ref{1.1}) in
the unit disc in terms of their $(\alpha ,\beta ,\gamma )$-order and $%
(\alpha ,\beta ,\gamma )$-type. Our results extend and unify several earlier
theorems concerning iterated $p$-order, $[p,q]$-order, and $\varphi $-order,
and provide a more general description of the growth behaviour of solutions
in the unit disc.

Let $L$ be the class of continuous, non-negative functions $\alpha :(-\infty
,+\infty )\rightarrow \left[ 0,+\infty \right) $ such that $\alpha
(x)=\alpha (x_{0})\geq 0$ for $x\leq x_{0}$ and $\alpha (x)\uparrow +\infty $
as $x_{0}\leq x\rightarrow +\infty $. We say that $\alpha \in L_{1}$, if $%
\alpha \in L$ and $\alpha (a+b)\leq \alpha (a)+\alpha (b)+c$ for all $%
a,b\geq R_{0}$ and fixed $c\in (0,+\infty )$. Further, we say that $\alpha
\in L_{2}$, if $\alpha \in L$ and $\alpha (x+O(1))=(1+o(1))\alpha (x)$ as $%
x\rightarrow +\infty $. Finally, $\alpha \in L_{3}$, if $\alpha \in L$ and $%
\alpha $ is subadditive, that is, $\alpha (a+b)\leq \alpha (a)+\alpha (b)$
for all $a,b\geq R_{0}$. Clearly $L_{3}\subset L_{1}$.

Particularly, if $\alpha \in L_{3}$, then for any integer $m\geq 2$, $\alpha
(mr)\leq m\alpha (r)$.

Concavity also implies subadditivity: if $\alpha (r)$ is concave on $%
[0,+\infty )$ with $\alpha (0)\geq 0$, then $\alpha (tx)\geq t\alpha (x)$,
for $t\in \lbrack 0,1]$, which yields%
\begin{equation*}
\alpha (a+b)\leq \alpha (a)+\alpha (b)
\end{equation*}%
for $a,b\geq 0$. Moreover, if $\alpha $ is non-decreasing, subadditive, and
unbounded, then for any $R_{0}\geq 0$%
\begin{equation*}
\alpha (r)\leq \alpha (r+R_{0})\leq \alpha (r)+\alpha (R_{0})
\end{equation*}%
hence $\alpha (r)\sim \alpha (r+R_{0})$ as $r\rightarrow +\infty $.

We assume throughout the paper that $\alpha ,$ $\beta $ and $\gamma $
satisfy the following conditions : (i) Always $\alpha \in L_{1},$ $\beta \in
L_{2}$ and $\gamma \in L_{3}$; and (ii) $\alpha (\log ^{[p]}x)=o(\beta (\log
\gamma (x))),$ $p\geq 2,$ $\alpha (\log x)=o(\alpha \left( x\right) )$ and $%
\alpha ^{-1}(kx)=o\left( \alpha ^{-1}(x)\right) $ $\left( 0\leq k<1\right) $
as $x\rightarrow +\infty $.

Unless otherwise stated, we assume these conditions hold.

Recently, Heittokangas et al. \cite{23} introduced the concept of $\varphi $%
-order of entire and meromorphic functions, where $\varphi $ is a
subadditive function. Building on this idea, the second author and Biswas
\cite{6} introduced the $(\alpha ,\beta ,\gamma )$-order of a meromorphic
function in the complex plane.

Several works concerning the growth of solutions of higher-order
differential equations in terms of $(\alpha ,\beta ,\gamma )$-order have
since appeared; see \cite{6}, \cite{8} and \cite{9}.

For $x\in \lbrack 0,+\infty )$ and $k\in \mathbb{N}$ where $%
\mathbb{N}
$ is the set of all positive integers, we define the iterated exponential
and logarithmic functions by{\ }$\exp ^{[k]}x=\exp (\exp ^{[k-1]}x)$ and $%
\log ^{[k]}x=\log (\log ^{[k-1]}x)$ with the conventions{\ $\log ^{[0]}x=x$,
$\log ^{[-1]}x=\exp {x}$, $\exp ^{[0]}x=x$ and $\exp ^{[-1]}x=\log x$.}

Using these notions, Biswas et al. introduce the definitions of the $(\alpha
,\beta ,\gamma )$-order and $(\alpha ,\beta ,\gamma )$-type of an analytic
function $f$ in the unit disc $\Delta $ in the following ways:

\begin{definition}
\label{d1.1} (\cite{12})The $(\alpha ,\beta ,\gamma )$-order denoted by $%
\varrho _{(\alpha ,\beta ,\gamma )}[f]$ of a meromorphic function $f$ in $%
\Delta $ is defined by
\begin{equation*}
\varrho _{(\alpha ,\beta ,\gamma )}[f]=\underset{r\longrightarrow 1^{-}}{%
\lim \sup }\frac{\alpha \left( \log T\left( r,f\right) \right) }{\beta
\left( \log \gamma \left( \frac{1}{1-r}\right) \right) }.\newline
\end{equation*}%
If $f$ is an analytic function in $\Delta $, then the $(\alpha ,\beta
,\gamma )$-order is defined by
\begin{equation*}
\varrho _{(\alpha ,\beta ,\gamma ),M}[f]=\underset{r\longrightarrow 1^{-}}{%
\lim \sup }\frac{\alpha \left( \log ^{[2]}M\left( r,f\right) \right) }{\beta
\left( \log \gamma \left( \frac{1}{1-r}\right) \right) }.
\end{equation*}
\end{definition}

Similar to Definition \ref{d1.1}, we can also define the $(\alpha (\log
),\beta ,\gamma )$-order of a meromorphic function $f$ in $\Delta $ in the
following way:

\begin{definition}
\label{d1.2} If $f$ is a meromorphic function in $\Delta $, then
\begin{equation*}
\varrho _{(\alpha (\log ),\beta ,\gamma )}[f]=\underset{r\longrightarrow
1^{-}}{\lim \sup }\frac{\alpha \left( \log ^{[2]}T\left( r,f\right) \right)
}{\beta \left( \log \gamma \left( \frac{1}{1-r}\right) \right) }.
\end{equation*}%
If $f$ is an analytic function in $\Delta $, then
\begin{equation*}
\varrho _{(\alpha (\log ),\beta ,\gamma ),M}[f]=\underset{r\longrightarrow
1^{-}}{\lim \sup }\frac{\alpha \left( \log ^{[3]}M\left( r,f\right) \right)
}{\beta \left( \log \gamma \left( \frac{1}{1-r}\right) \right) }.
\end{equation*}
\end{definition}

\begin{proposition}
\label{p1.1} If $f$ is an analytic function in $\Delta $, then%
\begin{equation*}
\varrho _{(\alpha (\log ),\beta ,\gamma )}[f]=\varrho _{(\alpha (\log
),\beta ,\gamma ),M}[f].
\end{equation*}
\end{proposition}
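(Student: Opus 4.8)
The plan is to establish the identity $\varrho_{(\alpha(\log),\beta,\gamma)}[f]=\varrho_{(\alpha(\log),\beta,\gamma),M}[f]$ for analytic $f$ in $\Delta$ by sandwiching the Nevanlinna characteristic $T(r,f)$ between two multiples of $\log^{+}M(r,f)$ using the standard inequality valid in the unit disc: for $0<r<R<1$,
\begin{equation*}
T(r,f)\leq \log^{+}M(r,f)\leq \frac{R+r}{R-r}\,T(R,f).
\end{equation*}
From the left inequality one immediately gets $\log^{[2]}T(r,f)\leq \log^{[3]}M(r,f)$ (after a harmless adjustment for small $r$ so that all iterated logarithms are defined), and since $\alpha$ is non-decreasing this yields $\varrho_{(\alpha(\log),\beta,\gamma)}[f]\leq \varrho_{(\alpha(\log),\beta,\gamma),M}[f]$. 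The real content is the reverse inequality, which requires transferring the right inequality through three logarithms and then through $\alpha$, controlling the error produced by the factor $\frac{R+r}{R-r}$ and the change of radius from $r$ to $R$.

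The key steps, in order, are as follows. First, I would fix the standard choice $R=R(r)=\tfrac{1+r}{2}$ (or $R=1-\tfrac{1-r}{2}$), so that $1-R=\tfrac{1}{2}(1-r)$ and $\frac{R+r}{R-r}=\frac{1+3r}{1-r}\leq \frac{4}{1-r}$ for $r$ near $1$. Then $\log^{+}M(r,f)\leq \log^{+}\!\big(\frac{4}{1-r}T(R,f)\big)\leq \log T(R,f)+\log\frac{1}{1-r}+O(1)$ for $r$ close to $1$. Taking logarithms twice more, I obtain $\log^{[3]}M(r,f)\leq \log^{[2]}\!\big(\log T(R,f)+\log\frac{1}{1-r}+O(1)\big)$. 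The next step is to absorb the additive term $\log\frac{1}{1-r}$: since $T(R,f)$ tends to infinity (we may assume $f$ is transcendental, otherwise both orders are zero and the identity is trivial), the term $\log T(R,f)$ dominates unless the order is zero, and in any case $\log^{[2]}(A+B)\leq \log^{[2]}(2\max(A,B))\leq \log^{[2]}(\max(A,B))+o(1)$; more carefully, one shows $\log^{[3]}M(r,f)\leq \log^{[2]}T(R,f)+O(\log^{[3]}\frac{1}{1-r})+O(1)$, and then applies $\alpha$.

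Applying $\alpha$ to both sides and using $\alpha\in L_1$ (the condition $\alpha(a+b)\leq \alpha(a)+\alpha(b)+c$) gives
\begin{equation*}
\alpha\!\left(\log^{[3]}M(r,f)\right)\leq \alpha\!\left(\log^{[2]}T(R,f)\right)+\alpha\!\left(O\big(\log^{[3]}\tfrac{1}{1-r}\big)\right)+O(1).
\end{equation*}
Dividing by $\beta\!\left(\log\gamma(\tfrac{1}{1-r})\right)$ and letting $r\to 1^{-}$, the first term on the right, after noting that $1-R=\tfrac12(1-r)$ so that $\beta(\log\gamma(\tfrac{1}{1-R}))\sim \beta(\log\gamma(\tfrac{1}{1-r}))$ (here one uses $\gamma\in L_3$ so $\gamma(\tfrac{2}{1-r})\leq 2\gamma(\tfrac{1}{1-r})$, then $\beta\in L_2$ to absorb the $O(1)$ shift after a logarithm), contributes exactly $\varrho_{(\alpha(\log),\beta,\gamma)}[f]$ in the limsup. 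The error term $\alpha(O(\log^{[3]}\tfrac{1}{1-r}))$ divided by $\beta(\log\gamma(\tfrac{1}{1-r}))$ tends to $0$: this is where the standing hypothesis that $\alpha(\log x)=o(\alpha(x))$ (iterated, together with $\alpha(\log^{[p]}x)=o(\beta(\log\gamma(x)))$) is invoked to kill the lower-order contribution.

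The main obstacle I anticipate is the bookkeeping around the radius change $r\mapsto R$ and the additive error terms: one must verify carefully that $\beta(\log\gamma(\tfrac{1}{1-R}))$ and $\beta(\log\gamma(\tfrac{1}{1-r}))$ are asymptotically equivalent (using $\gamma\in L_3$, then $\beta\in L_2$), and that after three logarithms the factor $\tfrac{4}{1-r}$ leaves behind only a term of the form $\alpha(O(\log^{[3]}\tfrac{1}{1-r}))$ which is negligible against $\beta(\log\gamma(\tfrac{1}{1-r}))$ by hypothesis (ii). Everything else — the easy inequality, the reduction to the transcendental case, monotonicity of $\alpha$ — is routine, so the proof reduces to these asymptotic comparisons and the correct application of $\alpha\in L_1$, $\beta\in L_2$, $\gamma\in L_3$.
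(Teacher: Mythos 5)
Your proposal follows essentially the same route as the paper's proof: the double inequality $T(r,f)\leq \log^{+}M(r,f)\leq \frac{1+3r}{1-r}T\left(\frac{1+r}{2},f\right)$, subadditivity of $\alpha$ up to a constant ($\alpha\in L_{1}$), the bound $\gamma\left(\frac{2}{1-r}\right)\leq 2\gamma\left(\frac{1}{1-r}\right)$ from $\gamma\in L_{3}$, the $\beta\in L_{2}$ property to absorb the radius change, and hypothesis (ii) to annihilate the error term. The only blemish is an off-by-one in your iterated logarithms (the claimed $\log^{+}M(r,f)\leq \log^{+}\left(\frac{4}{1-r}T(R,f)\right)$ should read $\log^{[2]}M(r,f)\leq \log\left(\frac{4}{1-r}T(R,f)\right)$, and the resulting error term is $O\left(\log^{[2]}\frac{1}{1-r}\right)$ rather than $O\left(\log^{[3]}\frac{1}{1-r}\right)$), which is harmless since hypothesis (ii) applies for every $p\geq 2$.
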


\begin{proof}
For an analytic function $f$ in $\Delta $, it is well known that (cf. \cite%
{19})%
\begin{equation*}
T(r,f)\leq \log ^{+}M(r,f)\leq \frac{R+r}{R-r}T(R,f)\text{ }(0<r<R<1).
\end{equation*}%
Choosing $R=\frac{1+r}{2}$, we obtain%
\begin{equation}
T\left( r,f\right) \leq \log ^{+}M\left( r,f\right) \leq \frac{1+3r}{1-r}%
T\left( \frac{1+r}{2},f\right) .  \label{1.2}
\end{equation}%
Using the double inequality (\ref{1.2}) and the property
\begin{equation*}
\alpha (a+b)\leq \alpha (a)+\alpha (b)+c
\end{equation*}%
for all $a,b\geq R_{0}$ for some fixed $c\in (0,+\infty ),$ we derive%
\begin{equation*}
\frac{\alpha \left( \log ^{[2]}T\left( r,f\right) \right) }{\beta \left(
\log \gamma \left( \frac{1}{1-r}\right) \right) }\leq \frac{\alpha \left(
\log ^{[3]}M\left( r,f\right) \right) }{\beta \left( \log \gamma \left(
\frac{1}{1-r}\right) \right) }\leq \frac{\alpha \left( \log ^{[2]}\frac{4}{%
1-r}+\log ^{[2]}T\left( \frac{1+r}{2},f\right) +O\left( 1\right) \right) }{%
\beta \left( \log \gamma \left( \frac{1}{1-r}\right) \right) }
\end{equation*}%
\begin{equation*}
\leq \frac{\alpha \left( \log ^{[2]}\frac{1}{1-r}\right) }{\beta \left( \log
\gamma \left( \frac{1}{1-r}\right) \right) }+\frac{\alpha \left( \log
^{[2]}T\left( \frac{1+r}{2},f\right) \right) }{\beta \left( \log \gamma
\left( \frac{1}{1-r}\right) \right) }+\frac{c}{\beta \left( \log \gamma
\left( \frac{1}{1-r}\right) \right) }
\end{equation*}%
\begin{equation}
=\frac{\alpha \left( \log ^{[2]}\frac{1}{1-r}\right) }{\beta \left( \log
\gamma \left( \frac{1}{1-r}\right) \right) }+\frac{\alpha \left( \log
^{[2]}T\left( \frac{1+r}{2},f\right) \right) }{\beta \left( \log \gamma
\left( \frac{1}{1-\frac{1+r}{2}}\right) \right) }\cdot \frac{\beta \left(
\log \gamma \left( \frac{2}{1-r}\right) \right) }{\beta \left( \log \gamma
\left( \frac{1}{1-r}\right) \right) }+\frac{c}{\beta \left( \log \gamma
\left( \frac{1}{1-r}\right) \right) }.  \label{1.3}
\end{equation}%
Since $\gamma \left( \frac{2}{1-r}\right) \leq 2\gamma \left( \frac{1}{1-r}%
\right) $, $\beta \left( x+O\left( 1\right) \right) =\left( 1+o\left(
1\right) \right) \beta \left( x\right) $ and
\begin{equation*}
\alpha \left( \log ^{[2]}x\right) =o\left( \beta (\log \gamma \left(
x\right) \right) \text{ as }x=\frac{1}{1-r}\rightarrow +\infty \text{ when }%
r\rightarrow 1^{-},
\end{equation*}%
letting $r\rightarrow 1^{-}$ and taking the limit superior in (\ref{1.3})
yields the desired equality.
\end{proof}

\begin{definition}
\label{d1.3} (\cite{12}) The $(\alpha ,\beta ,\gamma )$-type denoted by $%
\tau _{(\alpha ,\beta ,\gamma )}[f]$ of a meromorphic function $f$ in $%
\Delta $ with $0<\varrho _{(\alpha ,\beta ,\gamma )}[f]<+\infty $ is defined
by
\begin{equation*}
\tau _{(\alpha ,\beta ,\gamma )}[f]=\underset{r\longrightarrow 1^{-}}{\lim
\sup }\frac{\exp \left( \alpha \left( \log T\left( r,f\right) \right)
\right) }{{\left( \exp \left( \beta \left( \log \gamma \left( \frac{1}{1-r}%
\right) \right) \right) \right) }^{\varrho _{(\alpha ,\beta ,\gamma )}[f]}}.
\end{equation*}%
If $f$ is an analytic function in $\Delta $ with $\varrho _{(\alpha ,\beta
,\gamma )}[f]\in (0,+\infty )$, then the ${(\alpha ,\beta ,\gamma )}$-type
of $f$ is defined by
\begin{equation*}
\tau _{(\alpha ,\beta ,\gamma ),M}[f]=\underset{r\longrightarrow 1^{-}}{\lim
\sup }\frac{\exp {\left( \alpha \left( \log ^{[2]}M\left( r,f\right) \right)
\right) }}{{\left( \exp \left( \beta \left( \log \gamma \left( \frac{1}{1-r}%
\right) \right) \right) \right) }^{\varrho _{(\alpha ,\beta ,\gamma )}[f]}}.
\end{equation*}
\end{definition}

Similar to Definition \ref{d1.3}, we can also define the $(\alpha (\log
),\beta ,\gamma )$-type of a meromorphic function $f$ in $\Delta $ in the
following way:

\begin{definition}
\label{d1.4} The $(\alpha (\log ),\beta ,\gamma )$-type denoted by $\tau
_{(\alpha (\log ),\beta ,\gamma )}[f]$ of a meromorphic function $f$ in $%
\Delta $ with $0<\varrho _{(\alpha (log),\beta ,\gamma )}[f]<+\infty $ is
defined by
\begin{equation*}
\tau _{(\alpha (\log ),\beta ,\gamma )}[f]=\underset{r\longrightarrow 1^{-}}{%
\lim \sup }\frac{\exp {\left( \alpha \left( \log ^{[2]}T\left( r,f\right)
\right) \right) }}{{\left( \exp \left( \beta \left( \log \gamma \left( \frac{%
1}{1-r}\right) \right) \right) \right) }^{\varrho _{(\alpha ,\beta ,\gamma
)}[f]}}.
\end{equation*}%
If $f$ is an analytic function in $\Delta $ with $\varrho _{(\alpha (\log
),\beta ,\gamma )}[f]\in (0,+\infty )$, then the $(\alpha (\log ),\beta
,\gamma )$-type of $f$ is defined by
\end{definition}

\begin{equation*}
\tau _{(\alpha (\log ),\beta ,\gamma ),M}[f]=\underset{r\longrightarrow 1^{-}%
}{\lim \sup }\frac{\exp {\left( \alpha \left( \log ^{[3]}M\left( r,f\right)
\right) \right) }}{{\left( \exp \left( \beta \left( \log \gamma \left( \frac{%
1}{1-r}\right) \right) \right) \right) }^{\varrho _{(\alpha ,\beta ,\gamma
)}[f]}}.
\end{equation*}

The paper is organized as follows. In Section 2, we present our main
results. Section 3 contains some useful lemmas, while the proofs are
presented in Section 4.

\section{Main Results}

In this section, we present the main results of the paper. The following
theorems generalize Theorems \ref{teo1.1}-\ref{teo1.3}, Theorem 2.3 in \cite%
{32} and are counterparts of the results in (\cite{9}), which were proved
for entire functions.

\begin{theorem}
\label{teo2.1} Let $A_{0}(z),A_{1}(z),...,A_{k-1}(z)$ be analytic functions
in $\Delta $ such that
\begin{equation*}
\max \left\{ \varrho _{(\alpha ,\beta ,\gamma
),M}[A_{j}]:j=1,...,k-1\right\} <\varrho _{(\alpha ,\beta ,\gamma
),M}[A_{0}].
\end{equation*}
Then every solution $f(z)\not\equiv 0$ of $\left( \ref{1.1}\right) $
satisfies
\begin{equation*}
\varrho _{(\alpha (\log ),\beta ,\gamma ),M}[f]=\varrho _{(\alpha ,\beta
,\gamma ),M}[A_{0}].
\end{equation*}
\end{theorem}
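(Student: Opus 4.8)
The plan is to prove the two inequalities
\[
\varrho _{(\alpha (\log ),\beta ,\gamma ),M}[f]\le \varrho _{(\alpha ,\beta ,\gamma ),M}[A_{0}]\quad \text{and}\quad \varrho _{(\alpha (\log ),\beta ,\gamma ),M}[f]\ge \varrho _{(\alpha ,\beta ,\gamma ),M}[A_{0}]
\]
separately. For the upper bound I would use the standard growth estimate for solutions of $(\ref{1.1})$ (which I would record as a lemma in Section 3): every solution $f\not\equiv 0$ satisfies $\varrho _{(\alpha (\log ),\beta ,\gamma ),M}[f]\le \max \{\varrho _{(\alpha ,\beta ,\gamma ),M}[A_{j}]:j=0,\dots ,k-1\}$. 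This is the $(\alpha ,\beta ,\gamma )$-analogue of the classical fact that solutions gain one exponential, the extra logarithm in the subscript $\alpha (\log )$ recording precisely that gain; its proof rests on the elementary bound for $\log ^{+}M(r,f)$ in terms of the coefficients together with the conditions on $\alpha ,\beta ,\gamma $. Since, by hypothesis, $\max \{\varrho _{(\alpha ,\beta ,\gamma ),M}[A_{j}]:1\le j\le k-1\}<\varrho _{0}:=\varrho _{(\alpha ,\beta ,\gamma ),M}[A_{0}]$, the overall maximum equals $\varrho _{0}$, and the upper bound follows. (If $\varrho _{0}=0$ the conclusion is immediate since orders are non-negative, so I may assume $\varrho _{0}\in (0,+\infty ]$.)

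For the lower bound --- the substantive part --- I would rewrite $(\ref{1.1})$, at points where $f(z)\ne 0$, as
\[
A_{0}(z)=-\left( \frac{f^{(k)}(z)}{f(z)}+\sum_{j=1}^{k-1}A_{j}(z)\frac{f^{(j)}(z)}{f(z)}\right) .
\]
For each $r$ I pick $z_{r}$ with $|z_{r}|=r$ and $|A_{0}(z_{r})|=M(r,A_{0})$, and apply a pointwise logarithmic derivative estimate of Chyzhykov--Gundersen--Heittokangas type (another Section 3 lemma): for $|z|=r$ with $r$ outside a set $E\subset [0,1)$ of finite logarithmic measure one has $\left|f^{(j)}(z)/f(z)\right|\le C\big[(1-r)^{-1}\max \{\log \frac{1}{1-r},\,T(\frac{1+r}{2},f)\}\big]^{2k}$ (the exact exponent is immaterial). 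Writing $G(r,f):=\max \{\log \frac{1}{1-r},\,T(\frac{1+r}{2},f)\}$, this gives, for $r\notin E$,
\[
M(r,A_{0})\le Ck\Big( 1+\max_{1\le j\le k-1}M(r,A_{j})\Big) \Big( \tfrac{1}{1-r}\Big) ^{2k}G(r,f)^{2k};
\]
the (isolated) values of $r$ for which $z_{r}$ is a zero of $f$ are absorbed by a harmless adjustment of $r$ within $[0,1)\setminus E$.

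Next I fix $\varrho _{1}$ with $\max \{\varrho _{(\alpha ,\beta ,\gamma ),M}[A_{j}]:1\le j\le k-1\}<\varrho _{1}<\varrho _{0}$ and $\varepsilon >0$ with $\varrho _{1}<\varrho _{0}-\varepsilon $ (when $\varrho _{0}=+\infty $ replace $\varrho _{0}-\varepsilon $ by an arbitrarily large $N$). Along a sequence $r_{n}\to 1^{-}$ realizing the $\limsup $ defining $\varrho _{(\alpha ,\beta ,\gamma ),M}[A_{0}]$ --- which may be chosen outside $E$ at the cost of an arbitrarily small loss, since $E$ has finite logarithmic measure and $M(r,A_{0})$ is increasing --- I insert the bounds $M(r_{n},A_{j})\le \exp ^{[2]}(\alpha ^{-1}(\varrho _{1}u_{n}))$ for $1\le j\le k-1$ and $\log \frac{1}{1-r_{n}}\le \exp (\alpha ^{-1}(\varrho _{1}u_{n}))$, both consequences of $\alpha (\log ^{[2]}x)=o(\beta (\log \gamma (x)))$, together with $M(r_{n},A_{0})\ge \exp ^{[2]}(\alpha ^{-1}((\varrho _{0}-\varepsilon )u_{n}))$, where $u_{n}:=\beta (\log \gamma (\frac{1}{1-r_{n}}))\to +\infty $. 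Applying $\log ^{[2]}$ to the displayed estimate and invoking the decisive hypothesis $\alpha ^{-1}(kx)=o(\alpha ^{-1}(x))$ for $0\le k<1$, with $k=\varrho _{1}/(\varrho _{0}-\varepsilon )$ --- which makes $\exp ^{[2]}(\alpha ^{-1}(\varrho _{1}u_{n}))$ negligible against $\exp ^{[2]}(\alpha ^{-1}((\varrho _{0}-\varepsilon )u_{n}))$ --- I conclude that $2k\log T(\frac{1+r_{n}}{2},f)$ must dominate the right-hand side (in particular $G(r_{n},f)=T(\frac{1+r_{n}}{2},f)$, the option $G(r_{n},f)=\log \frac{1}{1-r_{n}}$ being ruled out by the same $o(\cdot )$ relations). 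Hence $\log ^{[2]}T(\frac{1+r_{n}}{2},f)\ge \alpha ^{-1}((\varrho _{0}-\varepsilon )u_{n})-O(1)$, and then, applying $\alpha $ and using $\alpha \in L_{1}$, $\alpha (\log ^{[2]}T(\frac{1+r_{n}}{2},f))\ge (\varrho _{0}-\varepsilon )u_{n}-O(1)$.

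It remains to transfer the radius from $\frac{1+r_{n}}{2}$ back to $r_{n}$. With $s_{n}=\frac{1+r_{n}}{2}$ we have $\frac{1}{1-s_{n}}=\frac{2}{1-r_{n}}$, so $\gamma (\frac{2}{1-r_{n}})\le 2\gamma (\frac{1}{1-r_{n}})$ ($\gamma \in L_{3}$) and $\beta (x+O(1))=(1+o(1))\beta (x)$ ($\beta \in L_{2}$) give $\beta (\log \gamma (\frac{1}{1-s_{n}}))\le (1+o(1))u_{n}$, whence
\[
\frac{\alpha (\log ^{[2]}T(s_{n},f))}{\beta (\log \gamma (\frac{1}{1-s_{n}}))}\ge (\varrho _{0}-\varepsilon )(1+o(1))^{-1}-o(1)\longrightarrow \varrho _{0}-\varepsilon .
\]
Letting $n\to \infty $, then $\varepsilon \to 0$ (resp.\ $N\to \infty $), yields $\varrho _{(\alpha (\log ),\beta ,\gamma )}[f]\ge \varrho _{0}$, which equals $\varrho _{(\alpha (\log ),\beta ,\gamma ),M}[f]$ by Proposition \ref{p1.1}; combined with the upper bound this is the claimed equality. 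I expect the main obstacles to be, first, the bookkeeping around the exceptional set $E$ (selecting $r_{n}\notin E$ without losing the closeness of the $A_{0}$-ratio to $\varrho _{0}$, and the zeros of $f$ at $z_{r}$), and, second and more essentially, verifying that conditions (ii) on $(\alpha ,\beta ,\gamma )$ are exactly what is needed to annihilate the coefficient contributions after two logarithms --- this is where $\alpha ^{-1}(kx)=o(\alpha ^{-1}(x))$ and $\alpha (\log ^{[p]}x)=o(\beta (\log \gamma (x)))$ carry the whole argument.
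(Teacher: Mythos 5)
Your proposal is correct and follows essentially the same route as the paper: the upper bound via the growth lemma for solutions (the paper's Lemma \ref{lem3.9}), and the lower bound by isolating $A_{0}$ in (\ref{1.1}), applying the Chyzhykov--Gundersen--Heittokangas pointwise logarithmic-derivative estimate outside a set of finite logarithmic measure, and exploiting a set of infinite logarithmic measure on which $M(r,A_{0})$ realizes its order (the paper's Lemma \ref{lem3.6}) together with conditions (ii) on $\alpha ,\beta ,\gamma $. The only difference is presentational: you extract the lower bound on $T\left(\frac{1+r_{n}}{2},f\right)$ directly, while the paper runs the same inequalities as a proof by contradiction with auxiliary constants $\lambda _{1}<\lambda _{2}$.
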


\begin{theorem}
\label{teo2.2} Let $A_{0}(z),A_{1}(z),...,A_{k-1}(z)$ be analytic functions.
Assume that
\begin{equation*}
\max \{\varrho _{(\alpha ,\beta ,\gamma ),M}[A_{j}]:j=1,...,k-1\}\leq
\varrho _{(\alpha ,\beta ,\gamma ),M}[A_{0}]=\varrho _{0}<+\infty
\end{equation*}%
and
\begin{eqnarray*}
\max \{\tau _{(\alpha ,\beta ,\gamma ),M}[A_{j}] &:&\varrho _{(\alpha ,\beta
,\gamma ),M}[A_{j}]=\varrho _{(\alpha ,\beta ,\gamma ),M}[A_{0}]>0\} \\
&<&\tau _{(\alpha ,\beta ,\gamma ),M}[A_{0}]=\tau _{0}.
\end{eqnarray*}%
Then every solution $f(z)\not\equiv 0$ of $\left( \ref{1.1}\right) $
satisfies $\varrho _{(\alpha (\log ),\beta ,\gamma ),M}[f]=\varrho _{(\alpha
,\beta ,\gamma ),M}[A_{0}].$
\end{theorem}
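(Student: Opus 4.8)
The plan is to establish separately the inequalities $\varrho_{(\alpha(\log),\beta,\gamma),M}[f]\le\varrho_0$ and $\varrho_{(\alpha(\log),\beta,\gamma),M}[f]\ge\varrho_0$. The first needs no new idea: the hypotheses give $\max\{\varrho_{(\alpha,\beta,\gamma),M}[A_j]:0\le j\le k-1\}=\varrho_0<+\infty$, and $\varrho_{(\alpha(\log),\beta,\gamma),M}[f]\le\varrho_0$ is then precisely the general growth estimate for solutions of $(\ref{1.1})$ whose coefficients are analytic in $\Delta$ of finite $(\alpha,\beta,\gamma)$-order, obtained exactly as in the proof of Theorem~\ref{teo2.1} together with Proposition~\ref{p1.1}. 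If $\varrho_0=0$ this already yields the assertion, since $\varrho_{(\alpha(\log),\beta,\gamma),M}[f]\ge0$; so from now on I assume $\varrho_0>0$.

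For the reverse inequality I argue by contradiction: suppose $\varrho:=\varrho_{(\alpha(\log),\beta,\gamma),M}[f]<\varrho_0$ and fix $\varrho<\varrho'<\varrho_0$. Writing $(\ref{1.1})$ as $-A_0=\tfrac{f^{(k)}}{f}+A_{k-1}\tfrac{f^{(k-1)}}{f}+\dots+A_1\tfrac{f'}{f}$, choosing for each $r\in(0,1)$ a point $z_r$ with $|z_r|=r$ and $|A_0(z_r)|=M(r,A_0)$, and applying the pointwise logarithmic derivative estimate for meromorphic functions in $\Delta$ (as provided by the lemmas of Section~3) to each quotient $f^{(j)}(z_r)/f(z_r)$, one obtains constants $C,\mu,\nu>0$ and a set $E\subset(0,1)$ with $\int_E\frac{dr}{1-r}<+\infty$ such that, with $\sigma(r)=1-\tfrac12(1-r)$,
\begin{equation*}
M(r,A_0)\le C\Bigl(\frac{1}{1-r}\Bigr)^{\mu}\bigl(T(\sigma(r),f)\bigr)^{\nu}\Bigl(1+\sum_{j=1}^{k-1}M(r,A_j)\Bigr),\qquad r\notin E.
\end{equation*}
Split $\{1,\dots,k-1\}=I_1\cup I_2$ with $I_1=\{j:\varrho_{(\alpha,\beta,\gamma),M}[A_j]<\varrho_0\}$ and $I_2=\{j:\varrho_{(\alpha,\beta,\gamma),M}[A_j]=\varrho_0\}$; by the second hypothesis $\tau_1:=\max\{\tau_{(\alpha,\beta,\gamma),M}[A_j]:j\in I_2\}<\tau_0$. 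Fix $\varepsilon>0$ with $\tau_1+\varepsilon<\tau_0-\varepsilon$.

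Next I estimate the right-hand side for $r$ near $1$. Put $L(r)=\beta\bigl(\log\gamma(\tfrac{1}{1-r})\bigr)$. Using Proposition~\ref{p1.1}, the bound $\varrho<\varrho'$, and the comparisons $\tfrac{1}{1-\sigma(r)}=\tfrac{2}{1-r}$, $\gamma(2x)\le2\gamma(x)$ ($\gamma\in L_3$), $\beta(x+O(1))=(1+o(1))\beta(x)$ ($\beta\in L_2$), one gets $T(\sigma(r),f)\le\exp^{[2]}\bigl(\alpha^{-1}(\varrho' L(r))\bigr)$; from the definitions of $(\alpha,\beta,\gamma)$-order and $(\alpha,\beta,\gamma)$-type, $M(r,A_j)\le\exp^{[2]}\bigl(\alpha^{-1}((\varrho_0-\eta)L(r))\bigr)$ for $j\in I_1$ and some $\eta>0$, while $M(r,A_j)\le\exp^{[2]}\bigl(\alpha^{-1}(\varrho_0 L(r)+\log(\tau_1+\varepsilon))\bigr)$ for $j\in I_2$. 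Since $\alpha^{-1}(kx)=o(\alpha^{-1}(x))$ for $0\le k<1$ and $\alpha(\log^{[2]}x)=o(\beta(\log\gamma x))$, the $I_2$-bound dominates the solution's factor, the $I_1$-bounds and the polynomial $(1-r)^{-\mu}$; taking $\log^{[2]}$ of the displayed inequality and then applying $\alpha$ (using $\alpha\in L_1$ and the slow growth of $\alpha$ imposed by the standing assumptions) collapses it to
\begin{equation*}
\alpha\bigl(\log^{[2]}M(r,A_0)\bigr)\le\varrho_0 L(r)+\log(\tau_1+\varepsilon)+o(1)\qquad(r\to1^-,\ r\notin E).
\end{equation*}
Finally take a sequence $r_n\to1^-$ realizing the $(\alpha,\beta,\gamma)$-type of $A_0$, so that $\alpha(\log^{[2]}M(r_n,A_0))\ge\varrho_0 L(r_n)+\log(\tau_0-\varepsilon)$; a companion-radius argument (using $\gamma\in L_3$, $\beta\in L_2$, and $\int_E\frac{dr}{1-r}<+\infty$) lets us assume $r_n\notin E$. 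Combining the two displays yields $\log(\tau_0-\varepsilon)\le\log(\tau_1+\varepsilon)+o(1)$, hence $\tau_0-\varepsilon\le\tau_1+\varepsilon$, contradicting the choice of $\varepsilon$. Therefore $\varrho_{(\alpha(\log),\beta,\gamma),M}[f]\ge\varrho_0$, which together with the first part proves the theorem. (When $I_2=\varnothing$ the same computation gives $\alpha(\log^{[2]}M(r,A_0))\le\varrho' L(r)+o(1)$ for $r\notin E$ near $1$, contradicting $\varrho_{(\alpha,\beta,\gamma),M}[A_0]=\varrho_0$; this case reduces to Theorem~\ref{teo2.1}.)

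The genuinely delicate point is obtaining the second display with a true $o(1)$ error rather than an $O(1)$ one: each error contribution — the polynomial $(1-r)^{-\mu}$, the solution's factor $\bigl(T(\sigma(r),f)\bigr)^{\nu}$, the subdominant coefficients $A_j$ with $j\in I_1$, the $O(1)$ coming from the number of summands in $1+\sum M(r,A_j)$, and the constant $c$ in the defining inequality of $\alpha\in L_1$ — has to be absorbed into a single $o(1)$, because an $O(1)$ error could swamp the gap between $\tau_0$ and $\tau_1$. This is exactly where the full set of standing hypotheses on $\alpha,\beta,\gamma$ is needed; in particular $\alpha^{-1}(kx)=o(\alpha^{-1}(x))$ forces $\alpha$ to grow sublinearly, which yields $\alpha(y+o(1))=\alpha(y)+o(1)$ as $y\to+\infty$. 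A second technical point, standard in the unit disc but not quite routine in this generality, is to reconcile the exceptional set $E$ of the logarithmic derivative estimate with the sequence $\{r_n\}$ realizing the type of $A_0$ without losing the lower bound on $\alpha(\log^{[2]}M(r_n,A_0))$; this requires a careful companion-radius estimate resting on $\beta\in L_2$ and $\gamma\in L_3$, presumably packaged among the lemmas of Section~3.
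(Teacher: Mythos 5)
Your overall strategy coincides with the paper's: the upper bound $\varrho _{(\alpha (\log ),\beta ,\gamma ),M}[f]\leq \varrho _{0}$ comes from Lemma \ref{lem3.9}, and the lower bound is obtained by contradiction from $|A_{0}|\leq |f^{(k)}/f|+\sum_{j}|A_{j}|\,|f^{(j)}/f|$, splitting the indices $j\geq 1$ according to whether $\varrho _{(\alpha ,\beta ,\gamma ),M}[A_{j}]$ equals $\varrho _{0}$ (your $I_{2}$, the paper's $K$) or not, and playing the type of $A_{0}$ against the types of the $A_{j}$, $j\in K$. The one place where you genuinely deviate is the reconciliation of the exceptional set $E$ with the radii at which $M(r,A_{0})$ is large, and that is exactly where your argument has a gap. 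You propose to take a sequence $r_{n}$ realizing $\tau _{0}$ and shift it to a companion radius $r_{n}^{\prime }\notin E$. But any such shift replaces $L(r_{n})=\beta \left( \log \gamma \left( \frac{1}{1-r_{n}}\right) \right) $ by $L(r_{n}^{\prime })$ with only $L(r_{n}^{\prime })\leq (1+o(1))L(r_{n})$ available: $\gamma \in L_{3}$ gives $\gamma (cx)\leq \lceil c\rceil \gamma (x)$, hence merely $\log \gamma \left( \frac{1}{1-r_{n}^{\prime }}\right) \leq \log \gamma \left( \frac{1}{1-r_{n}}\right) +O(1)$, and $\beta \in L_{2}$ then yields a multiplicative $(1+o(1))$, not an additive $o(1)$. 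The resulting error $\varrho _{0}\left( L(r_{n}^{\prime })-L(r_{n})\right) =o(L(r_{n}))$ is unbounded on the scale at which the type lives and swamps the constants $\log (\tau _{0}-\varepsilon )$ and $\log (\tau _{1}+\varepsilon )$ that carry the whole contradiction. The paper avoids this by Lemma \ref{lem3.7}, which upgrades the realizing sequence to a set $I_{1}$ of \emph{infinite} logarithmic measure on which the type lower bound for $M(r,A_{0})$ holds; one then works directly on $I_{1}\setminus F$, still of infinite logarithmic measure, and no shifting (hence no loss in $L$) occurs. Without this lemma or an equivalent statement, your conclusion $\tau _{0}-\varepsilon \leq \tau _{1}+\varepsilon $ does not follow.

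A secondary issue: your claim that $\alpha ^{-1}(kx)=o(\alpha ^{-1}(x))$ forces $\alpha (y+o(1))=\alpha (y)+o(1)$ is not justified; sublinearity of a continuous increasing function does not control its modulus of continuity at infinity, and $\alpha \in L_{1}$ only gives $\alpha (y+O(1))\leq \alpha (y)+O(1)$. Since an $O(1)$ additive error in $\alpha \left( \log ^{[2]}M(r,A_{0})\right) $ is of the same size as the gap $\log \tau _{0}-\log \tau _{1}$ you are trying to detect, this step also needs repair. The paper sidesteps it by keeping every estimate inside $\exp ^{[2]}\left\{ \alpha ^{-1}(\cdot )\right\} $ and absorbing the spurious factors into a small increase $\lambda _{3}\mapsto \lambda _{3}+2\varepsilon $ of the type parameter, as in $(\ref{4.12})$, before $\alpha $ is ever applied.
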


By combining Theorem \ref{teo2.1} and Theorem \ref{teo2.2}, we obtain the
following result.

\begin{corollary}
\label{c2.1} Let $A_{0}(z),A_{1}(z),...,A_{k-1}(z)$ be analytic functions.
Assume that
\begin{equation*}
\max \{\varrho _{(\alpha ,\beta ,\gamma ),M}[A_{j}]:j=1,...,k-1\}<\varrho
_{(\alpha ,\beta ,\gamma ),M}[A_{0}]=\varrho _{0}<+\infty ,
\end{equation*}%
or%
\begin{eqnarray*}
\max \{\varrho _{(\alpha ,\beta ,\gamma ),M}[A_{j}] &:&j=1,...,k-1\} \\
&\leq &\varrho _{(\alpha ,\beta ,\gamma ),M}[A_{0}]=\varrho _{0}<+\infty
(0<\varrho _{0}<+\infty )
\end{eqnarray*}%
and
\begin{eqnarray*}
\max \{\tau _{(\alpha ,\beta ,\gamma ),M}[A_{j}] &:&\varrho _{(\alpha ,\beta
,\gamma ),M}[A_{j}]=\varrho _{(\alpha ,\beta ,\gamma ),M}[A_{0}]>0\} \\
&<&\tau _{(\alpha ,\beta ,\gamma ),M}[A_{0}]=\tau _{0}\newline
(0<\tau _{0}<+\infty ).
\end{eqnarray*}%
Then every solution $f(z)\not\equiv 0$ of $\left( \ref{1.1}\right) $
satisfies $\varrho _{(\alpha (\log ),\beta ,\gamma ),M}[f]=\varrho _{(\alpha
,\beta ,\gamma ),M}[A_{0}].$
\end{corollary}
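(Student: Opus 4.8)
The plan is to deduce Corollary \ref{c2.1} directly from Theorem \ref{teo2.1} and Theorem \ref{teo2.2}, observing that the two alternative hypothesis sets stated in the corollary correspond precisely to the hypotheses of those two theorems. First I would treat the first alternative: if
\begin{equation*}
\max\{\varrho_{(\alpha,\beta,\gamma),M}[A_j]:j=1,\dots,k-1\}<\varrho_{(\alpha,\beta,\gamma),M}[A_0]=\varrho_0<+\infty,
\end{equation*}
then the strict-dominance hypothesis of Theorem \ref{teo2.1} is satisfied verbatim (the extra finiteness of $\varrho_0$ is harmless and in fact is not even needed there), so Theorem \ref{teo2.1} immediately gives every solution $f\not\equiv0$ of \eqref{1.1} with $\varrho_{(\alpha(\log),\beta,\gamma),M}[f]=\varrho_{(\alpha,\beta,\gamma),M}[A_0]$, which is the asserted conclusion.

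Next I would treat the second alternative: if
\begin{equation*}
\max\{\varrho_{(\alpha,\beta,\gamma),M}[A_j]:j=1,\dots,k-1\}\le\varrho_{(\alpha,\beta,\gamma),M}[A_0]=\varrho_0\in(0,+\infty)
\end{equation*}
together with
\begin{equation*}
\max\{\tau_{(\alpha,\beta,\gamma),M}[A_j]:\varrho_{(\alpha,\beta,\gamma),M}[A_j]=\varrho_{(\alpha,\beta,\gamma),M}[A_0]>0\}<\tau_{(\alpha,\beta,\gamma),M}[A_0]=\tau_0\in(0,+\infty),
\end{equation*}
then these are exactly the hypotheses of Theorem \ref{teo2.2}; note that the $(\alpha,\beta,\gamma)$-type is only defined when the $(\alpha,\beta,\gamma)$-order lies in $(0,+\infty)$, which is why the corollary restricts to $0<\varrho_0<+\infty$ and $0<\tau_0<+\infty$, and this restriction makes the invocation of Theorem \ref{teo2.2} legitimate. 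Applying Theorem \ref{teo2.2} then yields $\varrho_{(\alpha(\log),\beta,\gamma),M}[f]=\varrho_{(\alpha,\beta,\gamma),M}[A_0]$ for every solution $f\not\equiv0$ of \eqref{1.1}.

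Since in either case the same conclusion holds, the corollary follows. The only point requiring a word of care — and the closest thing here to an obstacle — is to check that there is genuinely no gap between the two alternatives, i.e. that any configuration of coefficients satisfying the corollary's blanket assumptions falls under one of the two cases; but this is immediate, because the corollary is phrased as a disjunction of the two hypothesis sets, so one simply applies whichever theorem matches. I would therefore present the proof in two short paragraphs, one per case, each ending with the citation of the relevant theorem, and conclude that $\varrho_{(\alpha(\log),\beta,\gamma),M}[f]=\varrho_{(\alpha,\beta,\gamma),M}[A_0]$ in all cases.
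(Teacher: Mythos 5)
Your proposal is correct and matches the paper exactly: the corollary is stated there as an immediate consequence of combining Theorem \ref{teo2.1} (first alternative) and Theorem \ref{teo2.2} (second alternative), which is precisely your two-case argument.
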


\section{Some Auxiliary Lemmas}

In this section, we present several lemmas that will be needed later. The
following lemma is a direct consequence of Theorem 3.1 in (\cite{15}).

\begin{lemma}
\label{lem3.1}(\cite{15}) Let $k$ and $j$ be integers satisfying $k>j\geq 0$%
, and let $\varepsilon >0$ and $d\in (0,1)$. If $f$ is a meromorphic in $%
\Delta $ such that $f^{\left( j\right) }\not\equiv 0$, then
\begin{equation*}
\left\vert \frac{f^{(k)}(z)}{f^{(j)}(z)}\right\vert \leq \left[ \left( \frac{%
1}{1-|z|}\right) ^{2+\varepsilon }\max \left\{ \log \frac{1}{1-|z|}%
;T(s(|z|),f)\right\} \right] ^{k-j}
\end{equation*}%
for $|z|\not\in F,$ where $F\subset \lbrack 0,1)$ is a set of finite
logarithmic measure $m_{l}(F)=\underset{F}{\int }\frac{dr}{1-r}<\infty $,
and where $s(|z|)=1-d(1-r)$. Moreover, if $\varrho \left( f\right) <$ $%
\infty $, then
\begin{equation*}
\left\vert \frac{f^{(k)}(z)}{f^{(j)}(z)}\right\vert \leq \left( \frac{1}{%
1-|z|}\right) ^{\left( k-j\right) \left( \varrho \left( f\right)
+2+\varepsilon \right) }.
\end{equation*}
\end{lemma}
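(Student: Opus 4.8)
\noindent\emph{Proof idea.} The plan is to obtain both inequalities as direct consequences of Theorem~3.1 of \cite{15}, the unit-disc analogue of Gundersen's pointwise estimate for logarithmic derivatives; this is precisely where the exceptional set $F\subset[0,1)$ of finite logarithmic measure and the shifted radius $s(|z|)=1-d(1-|z|)$ originate. First I would apply \cite[Theorem~3.1]{15} to the meromorphic function $g=f^{(j)}$ (which is $\not\equiv0$ by hypothesis), for which $g^{(k-j)}/g=f^{(k)}/f^{(j)}$; this already produces, for $|z|\notin F$, a pointwise bound for $\bigl|f^{(k)}(z)/f^{(j)}(z)\bigr|$ in which $\max\bigl\{\log\frac{1}{1-|z|};T(s(|z|),f^{(j)})\bigr\}$ enters to the power $k-j$, multiplied by a power of $\frac{1}{1-|z|}$ and by finitely many slowly varying factors (powers of $\log\frac{1}{1-|z|}$ and absolute constants). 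Next I would replace $T(s(|z|),f^{(j)})$ by $T(s(|z|),f)$: iterating the standard disc estimate $T\bigl(r,f'\bigr)\le C\bigl(T(r,f)+\log\frac{1}{1-r}\bigr)$ gives $T\bigl(r,f^{(j)}\bigr)\le C_{j}\bigl(T(r,f)+\log\frac{1}{1-r}\bigr)$, and since $1-s(|z|)=d(1-|z|)$ this only enlarges the relevant maximum by a bounded factor. Finally, for $|z|$ sufficiently close to $1$ every slowly varying factor is dominated by $\bigl(\frac{1}{1-|z|}\bigr)^{\delta}$ for any prescribed $\delta>0$, so collecting all powers of $\frac{1}{1-|z|}$ turns the bound into $\Bigl[\bigl(\frac{1}{1-|z|}\bigr)^{2+\varepsilon}\max\bigl\{\log\frac{1}{1-|z|};T(s(|z|),f)\bigr\}\Bigr]^{k-j}$, after enlarging $F$ by an interval $[0,r_{1}]$ (which still has finite logarithmic measure). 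This is the first inequality.

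For the ``moreover'' part, assume $\varrho(f)<+\infty$, so that $\limsup_{r\to1^{-}}\frac{\log^{+}T(r,f)}{\log\frac{1}{1-r}}=\varrho(f)$. Then for each $\varepsilon_{1}>0$ there is $r_{0}\in(0,1)$ with $T(r,f)\le\bigl(\frac{1}{1-r}\bigr)^{\varrho(f)+\varepsilon_{1}}$ for $r_{0}\le r<1$. Since $1-s(|z|)=d(1-|z|)$ and $|z|\ge r_{0}$ forces $s(|z|)>|z|\ge r_{0}$, it follows that
\[
T\bigl(s(|z|),f\bigr)\le\Bigl(\frac{1}{d(1-|z|)}\Bigr)^{\varrho(f)+\varepsilon_{1}}\le\Bigl(\frac{1}{1-|z|}\Bigr)^{\varrho(f)+2\varepsilon_{1}}
\]
for $|z|$ close to $1$; combined with $\log\frac{1}{1-|z|}\le\bigl(\frac{1}{1-|z|}\bigr)^{\varepsilon_{1}}$ this yields $\max\bigl\{\log\frac{1}{1-|z|};T(s(|z|),f)\bigr\}\le\bigl(\frac{1}{1-|z|}\bigr)^{\varrho(f)+2\varepsilon_{1}}$ near $1$. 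Substituting into the first inequality gives
\[
\left|\frac{f^{(k)}(z)}{f^{(j)}(z)}\right|\le\Bigl(\frac{1}{1-|z|}\Bigr)^{(k-j)(2+\varepsilon)}\Bigl(\frac{1}{1-|z|}\Bigr)^{(k-j)(\varrho(f)+2\varepsilon_{1})}=\Bigl(\frac{1}{1-|z|}\Bigr)^{(k-j)(\varrho(f)+2+\varepsilon+2\varepsilon_{1})},
\]
and, since $\varepsilon,\varepsilon_{1}>0$ were arbitrary, renaming $\varepsilon+2\varepsilon_{1}$ as $\varepsilon$ gives the stated bound (still for $|z|\notin F$).

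The computations involved are routine, so I expect no genuine analytic obstacle; the only point requiring care is bookkeeping. One must extract from \cite[Theorem~3.1]{15} exactly the pointwise form used here and follow the set $F$ and the constant $d$ correctly through the substitution $g=f^{(j)}$ and the shift $s(|z|)=1-d(1-|z|)$; one must also justify the iterated inequality $T\bigl(r,f^{(j)}\bigr)\le C_{j}\bigl(T(r,f)+\log\frac{1}{1-r}\bigr)$ in the disc, which follows by iterating the first fundamental theorem together with the logarithmic derivative lemma applied to $f'/f$. Everything else is absorption of slowly varying factors into the exponent.
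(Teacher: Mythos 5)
This lemma is quoted, not proved, in the paper: the authors simply cite \cite{15} and remark that it is ``a direct consequence of Theorem 3.1'' there, so there is no in-paper argument to compare against step by step. Your reconstruction is correct, but part of it is superfluous: Theorem~3.1(a) of \cite{15} is already stated for an arbitrary pair $k>j\geq 0$ with the characteristic $T(s(|z|),f)$ of $f$ itself (not of $f^{(j)}$) on the right-hand side, so the first inequality of the lemma is essentially the cited theorem verbatim, and the detour through $g=f^{(j)}$ followed by the conversion $T(r,f^{(j)})\leq C_{j}\bigl(T(r,f)+\log\frac{1}{1-r}\bigr)$ is not needed. That detour is nevertheless legitimate as written: the iterated estimate holds outside a set of finite logarithmic measure, and since $1-s(r)=d(1-r)$ the substitution $u=s(r)$ gives $\frac{dr}{1-r}=\frac{du}{1-u}$, so the preimage under $s$ of a finite-logarithmic-measure exceptional set again has finite logarithmic measure --- a point worth making explicit if you keep this route. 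The ``moreover'' part is exactly the intended short argument: insert $T(r,f)\leq\bigl(\frac{1}{1-r}\bigr)^{\varrho(f)+\varepsilon_{1}}$ at the shifted radius, absorb the constant $d^{-(\varrho(f)+\varepsilon_{1})}$ and the $\log$ term into a small extra power of $\frac{1}{1-|z|}$, and rename $\varepsilon$; as you note, the resulting bound still only holds off the exceptional set $F$, which is how \cite{15} states it as well.
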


\begin{lemma}
\label{lem3.2}(\cite{19,20}) Let $f$ be a meromorphic function in the unit
disc $\Delta $ and let $k\in \mathbb{N}$. Then
\begin{equation*}
m\left( r,\frac{f^{(k)}}{f}\right) =S(r,f),
\end{equation*}%
where $S(r,f)=O\left( \log ^{+}T(r,f)+\log \left( \frac{1}{1-r}\right)
\right) $, possibly outside a set $F_{1}\subset \lbrack 0,1)$ with finite
logarithmic measure.
\end{lemma}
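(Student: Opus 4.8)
The plan is to reduce the statement for arbitrary $k\in\mathbb{N}$ to the first-order lemma on the logarithmic derivative in $\Delta$, which is then obtained from the Poisson--Jensen formula exactly as in the classical planar case, with an additional $\log\frac{1}{1-r}$ term coming from the disc geometry. For the reduction, in the nontrivial case $f^{(k-1)}\not\equiv 0$ one writes
\begin{equation*}
\frac{f^{(k)}}{f}=\frac{f^{(k)}}{f^{(k-1)}}\cdot\frac{f^{(k-1)}}{f^{(k-2)}}\cdots\frac{f'}{f},
\end{equation*}
so that by subadditivity of $\log^{+}$,
\begin{equation*}
m\!\left(r,\frac{f^{(k)}}{f}\right)\le\sum_{j=1}^{k}m\!\left(r,\frac{f^{(j)}}{f^{(j-1)}}\right);
\end{equation*}
it then remains to bound each summand by $S(r,f^{(j-1)})$ and to absorb $\log^{+}T(r,f^{(j-1)})$ into $\log^{+}T(r,f)+\log\frac{1}{1-r}$.

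For the first-order estimate I would fix $0<r<R<1$ and apply the Poisson--Jensen formula to $g$ (with $g\not\equiv 0$) on the disc $\{|w|<R\}$; differentiating the analytic completion of that identity represents $g'/g$ on $|z|=r$ as the integral of $\log|g|$ against the derivative of the Poisson kernel over $|w|=R$, plus a finite sum of elementary terms indexed by the zeros and poles $a_{\nu}$ of $g$ in $|w|<R$. Taking $\log^{+}$ and integrating over $|z|=r$, the boundary contribution is $O\!\left(\log^{+}T(R,g)+\log\frac{1}{R-r}\right)$, since the kernel derivative is $O\!\left((R-r)^{-2}\right)$ in modulus and $\int|\log|g||=O(T(R,g))$ on $|w|=R$, while the contribution of the zeros and poles is controlled, after replacing $r$ by a nearby radius avoiding the moduli $|a_{\nu}|$ if necessary, by the usual estimate, giving again $O\!\left(\log^{+}T(R,g)+\log\frac{1}{R-r}\right)$. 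One then chooses $R=R(r)$ close to $1$, for instance $R(r)=r+\frac{1-r}{e\,T(r,g)+1}$, and invokes a Borel-type growth lemma on $[0,1)$ with respect to $\frac{dr}{1-r}$ (see \cite{20}): outside a set of finite logarithmic measure one has $T(R(r),g)\le 2T(r,g)$ and $\log\frac{1}{R(r)-r}=O\!\left(\log^{+}T(r,g)+\log\frac{1}{1-r}\right)$, whence $m(r,g'/g)=S(r,g)$.

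Finally, applying this with $g=f^{(j-1)}$ and using $N(r,f^{(j)})\le(j+1)N(r,f)$ together with $m(r,f^{(j)})\le m(r,f)+m\!\left(r,f^{(j)}/f\right)$, one gets, outside an exceptional set, $T(r,f^{(j)})=O\!\left(T(r,f)+\log\frac{1}{1-r}\right)$, hence $\log^{+}T(r,f^{(j)})=O\!\left(\log^{+}T(r,f)+\log\frac{1}{1-r}\right)$; summing over $j=1,\dots,k$ and taking $F_{1}$ to be the finite union of the exceptional sets that appeared yields the lemma. The two points that require care are the bookkeeping of these exceptional sets and the appearance of the disc-specific term $\log\frac{1}{1-r}$, which is produced exactly when passing from radius $R(r)$ down to radius $r$; all of this is carried out in detail in \cite[Ch.~3]{19} and in \cite{20}, from which the lemma is quoted.
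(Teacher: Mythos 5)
The paper gives no proof of this lemma; it is quoted directly from Hayman \cite{19} and Heittokangas \cite{20}, and your sketch is precisely the classical argument those sources carry out (differentiated Poisson--Jensen for the first-order case with the disc-specific $\log\frac{1}{1-r}$ term, a Borel-type lemma with respect to $\frac{dr}{1-r}$ to remove the auxiliary radius $R(r)$, and a telescoping induction with $N(r,f^{(j)})\le (j+1)N(r,f)$ to pass to general $k$). Your reduction step is the same induction the paper itself performs in its proof of Lemma 3.3, so the proposal is correct and consistent with the intended source of the result.
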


Here, we present a generalized lemma on the logarithmic derivative in terms
of the $\left( \alpha ,\beta ,\gamma \right) -$order in the unit disc $%
\Delta $.

\begin{lemma}
\label{lem3.3} Let $f$ be a meromorphic function in $\Delta $ of order $%
\varrho _{(\alpha (\log ),\beta ,\gamma )}[f]=\varrho $ and $k\in \mathbb{N}$%
. Then, for any given $\varepsilon >0$,
\begin{equation*}
m\left( r,\frac{f^{(k)}}{f}\right) =O\left( \exp \left\{ \alpha ^{-1}\left(
\left( \varrho +\varepsilon \right) \beta \left( \log \gamma \left( \frac{1}{%
1-r}\right) \right) \right) \right\} \right) ,
\end{equation*}%
outside, possibly, an exceptional set $F_{2}\subset \lbrack 0,1)$ of finite
logarithmic measure.
\end{lemma}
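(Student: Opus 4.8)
The plan is to start from the classical logarithmic-derivative estimate of Lemma \ref{lem3.2} and then to upgrade its crude right-hand side $\log^{+}T(r,f)+\log\frac{1}{1-r}$ to the target quantity
\[
\Lambda(r):=\exp\left\{\alpha^{-1}\left((\varrho+\varepsilon)\beta\left(\log\gamma\left(\tfrac{1}{1-r}\right)\right)\right)\right\},
\]
using the definition of $\varrho_{(\alpha(\log),\beta,\gamma)}[f]$ together with the standing assumptions on $\alpha,\beta,\gamma$. Since the asserted estimate is vacuous when $\varrho=+\infty$, I would assume $0\le\varrho<+\infty$, and read $\alpha^{-1}$ as the (non-decreasing) generalized inverse of $\alpha$, so that $\alpha(t)\le y$ forces $t\le\alpha^{-1}(y)$ once $y$ lies in the range $[\alpha(x_0),+\infty)$ of $\alpha$ — which happens for $y=(\varrho+\varepsilon)\beta(\log\gamma(\frac{1}{1-r}))$ as soon as $r$ is close enough to $1$, this quantity tending to $+\infty$ as $r\to1^-$.

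First I would apply Lemma \ref{lem3.2}: there exist a set $F_2:=F_1\subset[0,1)$ of finite logarithmic measure and a constant $C>0$ such that
\[
m\left(r,\frac{f^{(k)}}{f}\right)\le C\left(\log^{+}T(r,f)+\log\frac{1}{1-r}\right)\qquad(r\in[0,1)\setminus F_2).
\]
It then suffices to show that $\log^{+}T(r,f)\le\Lambda(r)$ and $\log\frac{1}{1-r}\le\Lambda(r)$ for all $r$ sufficiently close to $1$. For the first of these, if $T(r,f)\le e$ then $\log^{+}T(r,f)\le1\le\Lambda(r)$ for $r$ near $1$ because $\Lambda(r)\to+\infty$; if $T(r,f)>e$, then $\log^{[2]}T(r,f)=\log\log T(r,f)>0$ and, from $\varrho=\limsup_{r\to1^-}\frac{\alpha(\log^{[2]}T(r,f))}{\beta(\log\gamma(\frac{1}{1-r}))}$, one gets $\alpha(\log^{[2]}T(r,f))\le(\varrho+\varepsilon)\beta(\log\gamma(\frac{1}{1-r}))$ for $r$ close to $1$, whence, applying $\alpha^{-1}$ and then $\exp$, $\log^{+}T(r,f)=\log T(r,f)\le\Lambda(r)$. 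For the second, I would put $x=\frac{1}{1-r}\to+\infty$ and invoke the standing hypothesis $\alpha(\log^{[p]}x)=o(\beta(\log\gamma(x)))$ with $p=2$ to obtain $\alpha(\log^{[2]}x)\le\varepsilon\beta(\log\gamma(x))\le(\varrho+\varepsilon)\beta(\log\gamma(x))$ for large $x$, and then, exactly as above, $\log\frac{1}{1-r}=\log x\le\Lambda(r)$.

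Combining the two estimates with the displayed bound from Lemma \ref{lem3.2} would give $m(r,f^{(k)}/f)\le 2C\,\Lambda(r)$ for $r\in[0,1)\setminus F_2$ near $1$, which is precisely the claimed $O$-estimate. I expect the only delicate point to be the bookkeeping around $\alpha^{-1}$: one has to verify that $\alpha(t)\le y$ implies $t\le\alpha^{-1}(y)$ whenever $y\ge\alpha(x_0)$, splitting into the cases $t<x_0$ and $t\ge x_0$, and to observe that the degenerate situation where $T(r,f)$ stays bounded (which forces $\varrho=0$) is already absorbed by the branch $T(r,f)\le e$. Apart from Lemma \ref{lem3.2} and condition (ii) on $(\alpha,\beta,\gamma)$, no further input should be required.
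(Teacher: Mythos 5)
Your argument is correct, and it reaches the estimate by a more direct route than the paper. The paper proves the lemma by induction on $k$: the base case $k=1$ combines Lemma \ref{lem3.2} with the bound $T(r,f)\leq\exp^{[2]}\{\alpha^{-1}[(\varrho+\varepsilon)\beta(\log\gamma(\frac{1}{1-r}))]\}$ coming from the definition of $\varrho_{(\alpha(\log),\beta,\gamma)}[f]$, and the inductive step first shows $T(r,f^{(k)})=O(\exp^{[2]}\{\cdots\})$, applies the logarithmic derivative lemma to $f^{(k)}$, and then uses $m(r,f^{(k+1)}/f)\leq m(r,f^{(k+1)}/f^{(k)})+m(r,f^{(k)}/f)$. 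You instead exploit the fact that Lemma \ref{lem3.2} is already stated for arbitrary $k\in\mathbb{N}$, so a single application gives $m(r,f^{(k)}/f)=O(\log^{+}T(r,f)+\log\frac{1}{1-r})$ off a set of finite logarithmic measure, and the whole proof reduces to the two bounds $\log^{+}T(r,f)\leq\Lambda(r)$ (from the definition of the order) and $\log\frac{1}{1-r}\leq\Lambda(r)$ (from condition (ii) with $p=2$) — exactly the same two ingredients the paper uses, but without the induction. Your version is shorter and avoids the intermediate estimate on $T(r,f^{(k)})$, which the paper does not reuse elsewhere; it also handles more carefully than the paper the degenerate cases (generalized inverse of $\alpha$, bounded $T(r,f)$, and the vacuity of the claim when $\varrho=+\infty$). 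The only cost is that you do not obtain the auxiliary growth bound (\ref{3.4}) on $T(r,f^{(k)})$, but nothing in the paper depends on it.
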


\begin{proof}
Let $k=1$. By the definition of the generalized order
\begin{equation*}
\varrho _{(\alpha (\log ),\beta ,\gamma )}[f]=\underset{r\longrightarrow
1^{-}}{\limsup }\frac{\alpha \left( \log ^{[2]}T\left( r,f\right) \right) }{%
\beta \left( \log \gamma \left( \frac{1}{1-r}\right) \right) }=\varrho ,
\end{equation*}%
it follows that for any $\varepsilon >0$ and all $r$ sufficiently close to $%
1,$%
\begin{equation*}
\alpha \left( \log ^{[2]}T\left( r,f\right) \right) \leq \left( \varrho
+\varepsilon \right) \beta \left( \log \gamma \left( \frac{1}{1-r}\right)
\right) .
\end{equation*}%
Since $\alpha $ is increasing and invertible, we obtain%
\begin{equation*}
\log ^{[2]}T\left( r,f\right) \leq \alpha ^{-1}\left[ \left( \varrho
+\varepsilon \right) \beta \left( \log \gamma \left( \frac{1}{1-r}\right)
\right) \right] ,
\end{equation*}%
and consequently,%
\begin{equation}
T\left( r,f\right) \leq \exp ^{[2]}\left\{ \alpha ^{-1}\left[ \left( \varrho
+\varepsilon \right) \beta \left( \log \gamma \left( \frac{1}{1-r}\right)
\right) \right] \right\} .  \label{3.1}
\end{equation}%
From $\left( \text{\ref{3.1}}\right) $ and Lemma \ref{lem3.2} of the
logarithmic derivative and the assumption
\begin{equation*}
\alpha \left( \log ^{[2]}x\right) =o\left( \beta (\log \gamma \left(
x\right) \right) \text{ as }x=\frac{1}{1-r}\rightarrow +\infty \text{ when }%
r\rightarrow 1^{-},
\end{equation*}%
we deduce that
\begin{align}
m\left( r,\frac{f^{\prime }}{f}\right) & =O\left( \log T\left( r,f\right)
+\log \left( \frac{1}{1-r}\right) \right)  \notag \\
& =O\left( \exp \left\{ \alpha ^{-1}\left( (\varrho +\varepsilon )\beta
\left( \log \gamma \left( \frac{1}{1-r}\right) \right) \right) \right\}
\right) ,\text{ }r\notin F_{2},  \label{3.2}
\end{align}%
where $F_{2}\subset \lbrack 0,1)$ is of finite linear logarithmic measure.%
\newline
Now assume that for some $k\in \mathbb{N}$,
\begin{equation}
m\left( r,\frac{f^{(k)}}{f}\right) =O\left( \exp \left\{ \alpha ^{-1}\left(
(\varrho +\varepsilon )\beta \left( \log \gamma \left( \frac{1}{1-r}\right)
\right) \right) \right\} \right) ,\text{ }r\notin F_{2}.  \label{3.3}
\end{equation}%
Since $N\left( r,f^{(k)}\right) \leq \left( k+1\right) N\left( r,f\right) $,
we have
\begin{align}
T(r,f^{(k)})& =m\left( r,f^{(k)}\right) +N\left( r,f^{(k)}\right) \leq
m\left( r,\frac{f^{(k)}}{f}\right) +m\left( r,f\right) +\left( k+1\right)
N\left( r,f\right)  \notag \\
& \leq \left( k+1\right) T\left( r,f\right) +O\left( \exp \left\{ \alpha
^{-1}\left( \left( \varrho +\varepsilon \right) \beta \left( \log \gamma
\left( \frac{1}{1-r}\right) \right) \right) \right\} \right)  \notag
\end{align}%
and hence, by $\left( \text{\ref{3.1}}\right) $,
\begin{equation}
T(r,f^{(k)})=O\left( \exp ^{[2]}\left\{ \alpha ^{-1}\left( \left( \varrho
+\varepsilon \right) \beta \left( \log \gamma \left( \frac{1}{1-r}\right)
\right) \right) \right\} \right) .  \label{3.4}
\end{equation}

Applying the logarithmic derivative lemma to $f^{(k)}$ and using $\left(
\text{\ref{3.4}}\right) $, we obtain
\begin{align}
m\left( r,\frac{f^{(k+1)}}{f^{(k)}}\right) & =m\left( r,\frac{\left(
f^{(k)}\right) ^{\prime }}{f^{(k)}}\right) =O\left( \log T\left(
r,f^{(k)}\right) +\log \left( \frac{1}{1-r}\right) \right)  \notag \\
& =O\left( \exp \left\{ \alpha ^{-1}\left( \left( \varrho +\varepsilon
\right) \beta \left( \log \gamma \left( \frac{1}{1-r}\right) \right) \right)
\right\} \right) ,\text{ }r\notin F_{2}.  \label{3.5}
\end{align}%
Finally, combining $\left( \text{\ref{3.3}}\right) $ and $\left( \text{\ref%
{3.5}}\right) ,$ we get%
\begin{equation*}
m\left( r,\frac{f^{(k+1)}}{f}\right) \leq m\left( r,\frac{f^{(k+1)}}{f^{(k)}}%
\right) +m\left( r,\frac{f^{(k)}}{f}\right)
\end{equation*}%
\begin{equation*}
=O\left( \exp \left\{ \alpha ^{-1}\left( \left( \varrho +\varepsilon \right)
\beta \left( \log \gamma \left( \frac{1}{1-r}\right) \right) \right)
\right\} \right) ,\text{ }r\notin F_{2}.
\end{equation*}%
This completes the induction and hence the proof.
\end{proof}

To avoid some problems of the exceptional sets, we need the following lemma.

\begin{lemma}
\label{lem3.4} (\cite{1,20}) Let $g:[0,1)\mapsto \mathbb{R}$ and $%
h:[0,1)\mapsto \mathbb{R}$ be monotone non-decreasing functions such that $%
g(r)\leq h(r)$ holds outside of an exceptional set $F_{3}\subset (0,1)$ of
finite logarithmic measure. Then there exists a $d\in (0,1)$ such that if $%
s(r)=1-d(1-r)$, then $g(r)\leq h(s(r))$ for all $r\in \lbrack 0,1)$.
\end{lemma}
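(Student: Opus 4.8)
The plan is to reduce the statement to a one-point assertion: for a suitably chosen fixed $d\in(0,1)$, every interval $[r,s(r)]$ with $s(r)=1-d(1-r)$ contains at least one point $t\notin F_{3}$. Once this is available, the lemma is immediate from monotonicity: if $t\in[r,s(r)]$ and $t\notin F_{3}$, then
\begin{equation*}
g(r)\le g(t)\le h(t)\le h(s(r)),
\end{equation*}
where the first and last inequalities use that $g$ and $h$ are non-decreasing together with $r\le t\le s(r)<1$, and the middle one uses the hypothesis $g\le h$ off $F_{3}$. Since $r\in[0,1)$ is arbitrary, this gives $g(r)\le h(s(r))$ everywhere.

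So the whole content lies in the choice of $d$. The key observation is the scale-invariance of the \emph{logarithmic} length of $[r,1-d(1-r)]$:
\begin{equation*}
\int_{r}^{1-d(1-r)}\frac{dt}{1-t}=\log\frac{1-r}{d(1-r)}=\log\frac{1}{d},
\end{equation*}
which does not depend on $r$. On the other hand, putting $L:=m_{l}(F_{3})=\int_{F_{3}}\frac{dt}{1-t}<\infty$, the portion of $[r,s(r)]$ lying in $F_{3}$ satisfies
\begin{equation*}
m_{l}\bigl(F_{3}\cap[r,s(r)]\bigr)\le m_{l}(F_{3})=L
\end{equation*}
for \emph{every} $r$, simply because $F_{3}\cap[r,s(r)]\subseteq F_{3}$ and $A\mapsto\int_{A}\tfrac{dt}{1-t}$ is monotone. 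Hence, choosing $d\in(0,1)$ with $\log(1/d)>L$ (equivalently $d<e^{-L}$; such $d$ exist since $e^{-L}>0$) forces
\begin{equation*}
m_{l}\bigl([r,s(r)]\setminus F_{3}\bigr)\ge\log\tfrac{1}{d}-L>0
\end{equation*}
for all $r\in[0,1)$, so in particular $[r,s(r)]\setminus F_{3}\neq\emptyset$.

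Combining the two steps: fix such a $d$, set $s(r)=1-d(1-r)$, take any $r\in[0,1)$, pick $t\in[r,s(r)]\setminus F_{3}$ (which exists by the previous paragraph — and one may take $t=r$ whenever $r\notin F_{3}$), and read off $g(r)\le g(t)\le h(t)\le h(s(r))$. This is the claim, with a $d$ that in fact depends only on $F_{3}$.

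I do not expect a genuine obstacle here. The argument is short once one notices the scale-invariance above, which is exactly what makes a single $d$ work uniformly in $r$; alternatively one could argue by contradiction, assuming that for $d=\tfrac{1}{n}$ some interval $[r_{n},1-\tfrac{1}{n}(1-r_{n})]\subseteq F_{3}$ and deriving $m_{l}(F_{3})=\infty$ from a disjoint subfamily of such intervals. The only points needing a line of care are the elementary measure-theoretic bookkeeping and the check that $s(r)=1-d(1-r)\in[r,1)$, so that $h(s(r))$ lies in the common domain $[0,1)$ and monotonicity may be applied. This is essentially the argument of \cite{1,20}.
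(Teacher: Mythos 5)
Your proof is correct: the scale-invariance computation $\int_{r}^{1-d(1-r)}\frac{dt}{1-t}=\log(1/d)$, the choice $d<e^{-m_{l}(F_{3})}$ guaranteeing a point $t\in[r,s(r)]\setminus F_{3}$, and the monotonicity chain $g(r)\le g(t)\le h(t)\le h(s(r))$ together give a complete argument. The paper states this lemma without proof, citing \cite{1,20}, and your argument is essentially the standard one from those references (Bank's half-line version transported to the unit disc), so there is nothing further to reconcile.
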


\begin{lemma}
\label{lem3.5} Let $f$ be a meromorphic function in $\Delta $. Then $\varrho
_{(\alpha ,\beta ,\gamma )}[f^{\prime }]=\varrho _{(\alpha ,\beta ,\gamma
)}[f]$.
\end{lemma}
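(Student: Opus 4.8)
The plan is to establish the equality by two one–sided estimates, $\varrho_{(\alpha,\beta,\gamma)}[f']\le \varrho_{(\alpha,\beta,\gamma)}[f]$ and the reverse inequality, each of which reduces to a comparison between the Nevanlinna characteristics $T(r,f)$ and $T(r,f')$ that is then transported through the outer functions $\alpha,\beta,\gamma$. If either order is infinite the corresponding inequality is trivial, so we may assume both are finite.

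First I would record the two–sided comparison of characteristics in $\Delta$. Since a pole of $f$ of order $q$ is a pole of $f'$ of order $q+1$, we have $N(r,f)\le N(r,f')\le 2N(r,f)$, while Lemma~\ref{lem3.2} gives $m(r,f'/f)=S(r,f)=O\!\left(\log^{+}T(r,f)+\log\frac{1}{1-r}\right)$ outside a set of finite logarithmic measure; together these yield $T(r,f')\le 2T(r,f)+S(r,f)$. In the same way, writing $f=f'\cdot(f/f')$ and estimating $m(r,f/f')$ by the first fundamental theorem ($T(r,f/f')=T(r,f'/f)+O(1)$) combined with Lemma~\ref{lem3.2}, one obtains the reverse estimate $T(r,f)\le 2T(r,f')+S(r,f)$, again outside an exceptional set of finite logarithmic measure. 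Hence, outside such a set, $T(r,f')\le C_{1}T(r,f)+C_{2}\log\frac{1}{1-r}+C_{3}$, and symmetrically with $f$ and $f'$ interchanged.

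Now fix $\varepsilon>0$. Taking logarithms in $T(r,f')\le C_{1}T(r,f)+C_{2}\log\frac{1}{1-r}+C_{3}$ gives, for $r$ close to $1$ and outside the exceptional set,
\[
\log T(r,f')\le \log T(r,f)+\log^{[2]}\tfrac{1}{1-r}+O(1),
\]
and applying $\alpha$ together with $\alpha\in L_{1}$ (used twice, so that $\alpha(a+b+c)\le\alpha(a)+\alpha(b)+\alpha(c)+O(1)$) we get
\[
\alpha\!\left(\log T(r,f')\right)\le \alpha\!\left(\log T(r,f)\right)+\alpha\!\left(\log^{[2]}\tfrac{1}{1-r}\right)+O(1).
\]
Dividing by $\beta\!\left(\log\gamma\!\left(\frac{1}{1-r}\right)\right)$ and using the hypothesis $\alpha(\log^{[2]}x)=o\!\left(\beta(\log\gamma(x))\right)$ as $x=\frac{1}{1-r}\to+\infty$, together with $\beta\!\left(\log\gamma\!\left(\frac{1}{1-r}\right)\right)\to+\infty$, the last two terms are $o(1)$. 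To discard the exceptional set I would apply Lemma~\ref{lem3.4} with $g(r)=T(r,f')$ and $h(r)=C_{1}T(r,f)+C_{2}\log\frac{1}{1-r}+C_{3}$ (both non-decreasing), which produces $d\in(0,1)$ such that, with $s(r)=1-d(1-r)$, the bound holds for all $r$ with $T(s(r),f)$ in place of $T(r,f)$. Since $\frac{1}{1-s(r)}=\frac{1}{d(1-r)}$ and $\gamma\in L_{3}$ gives $\gamma\!\left(\frac{m}{1-r}\right)\le m\,\gamma\!\left(\frac{1}{1-r}\right)$ for the integer $m=\lceil 1/d\rceil$, we have $\log\gamma\!\left(\frac{1}{1-s(r)}\right)\le\log\gamma\!\left(\frac{1}{1-r}\right)+O(1)$, whence $\beta\!\left(\log\gamma\!\left(\frac{1}{1-s(r)}\right)\right)=(1+o(1))\,\beta\!\left(\log\gamma\!\left(\frac{1}{1-r}\right)\right)$ by $\beta\in L_{2}$; and $\limsup_{r\to1^{-}}\frac{\alpha(\log T(s(r),f))}{\beta(\log\gamma(1/(1-s(r))))}=\varrho_{(\alpha,\beta,\gamma)}[f]$ because $s(r)\to1^{-}$. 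Taking $\limsup_{r\to1^{-}}$ gives $\varrho_{(\alpha,\beta,\gamma)}[f']\le\varrho_{(\alpha,\beta,\gamma)}[f]$, and the reverse inequality follows in exactly the same way from $T(r,f)\le 2T(r,f')+S(r,f)$ with the roles of $f$ and $f'$ interchanged.

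The main obstacle is the reverse characteristic comparison $T(r,f)\le 2T(r,f')+S(r,f)$ for meromorphic $f$: while $N(r,f)\le N(r,f')$ is immediate, the term $m(r,f/f')$ must be handled through $T(r,f/f')=T(r,f'/f)+O(1)$ and the logarithmic derivative lemma, and it is essential that the resulting error stay of size $S(r,f)$ rather than growing like a power of $\frac{1}{1-r}$ — otherwise the subsequent application of $\alpha$ would produce a term $\alpha(\log\frac{1}{1-r})$, which (unlike $\alpha(\log^{[2]}\frac{1}{1-r})$) need not be negligible against $\beta(\log\gamma(\frac{1}{1-r}))$. The remaining technical point is the bookkeeping of the auxiliary radius $s(r)$ coming from Lemma~\ref{lem3.4}, which is precisely where the subadditivity of $\gamma$ and the slow-variation property of $\beta$ enter.
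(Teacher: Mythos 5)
Your first inequality, $\varrho_{(\alpha,\beta,\gamma)}[f']\le\varrho_{(\alpha,\beta,\gamma)}[f]$, is correct and follows essentially the same route as the paper: $T(r,f')\le 2T(r,f)+m(r,f'/f)$, Lemma~\ref{lem3.2}, removal of the exceptional set via Lemma~\ref{lem3.4}, and the properties of $\gamma\in L_3$ and $\beta\in L_2$ to absorb the shift $r\mapsto s(r)$.

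The reverse direction, however, has a genuine gap. You claim $T(r,f)\le 2T(r,f')+S(r,f)$ by writing $f=f'\cdot(f/f')$ and bounding $m(r,f/f')$ through $T(r,f/f')=T(r,f'/f)+O(1)$ plus the logarithmic derivative lemma. But $T(r,f'/f)=m(r,f'/f)+N(r,f'/f)$, and $N(r,f'/f)$ counts the (distinct) zeros and poles of $f$; in particular $N(r,f'/f)$ is generally comparable to $T(r,f)$ itself (via $N(r,1/f)\le T(r,f)+O(1)$), not to $S(r,f)$. Tracing your estimate through, you only obtain $T(r,f)\le T(r,f')+T(r,f)+\bar N(r,f)+S(r,f)+O(1)$, which is vacuous. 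The logarithmic derivative lemma controls $m(r,f'/f)$ but gives no small bound on $m(r,f/f')$, and no same-radius inequality of the form $T(r,f)\le CT(r,f')+S(r,f)$ comes out of this argument. This is exactly why the paper instead invokes Chuang's comparison theorem (\cite{14}, see also \cite{13}), which gives
\begin{equation*}
T(r,f)\le O\!\left(T\!\left(\tfrac{r+3}{4},f'\right)+\log\tfrac{1}{1-r}\right),
\end{equation*}
i.e.\ a comparison at a strictly larger radius; the enlargement $r\mapsto\frac{r+3}{4}$ is then harmless because $\gamma\left(\frac{4}{1-r}\right)\le4\gamma\left(\frac{1}{1-r}\right)$ and $\beta(x+O(1))=(1+o(1))\beta(x)$. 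You need to replace your reverse characteristic comparison by such a Chuang-type estimate (or an equivalent two-radius argument); the rest of your bookkeeping with $\alpha^{-1}$, $\beta$, $\gamma$ would then go through as in your first direction.
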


\begin{proof}
Let $\varrho _{(\alpha ,\beta ,\gamma )}[f]=\varrho $. By the definition of
the $(\alpha ,\beta ,\gamma )$-order, for any given $\varepsilon >0$ and all
$r$ sufficiently close to $1,$
\begin{equation*}
\alpha \left( \log T\left( r,f\right) \right) \leq \left( \varrho
+\varepsilon \right) \beta \left( \log \gamma \left( \frac{1}{1-r}\right)
\right) ,
\end{equation*}%
which implies%
\begin{equation}
\log T\left( r,f\right) \leq \alpha ^{-1}\left[ \left( \varrho +\varepsilon
\right) \beta \left( \log \gamma \left( \frac{1}{1-r}\right) \right) \right]
.  \label{3.6}
\end{equation}%
Clearly,
\begin{equation*}
T\left( r,f^{\prime }\right) \leq 2T\left( r,f\right) +m\left( r,\frac{%
f^{\prime }}{f}\right) .
\end{equation*}%
By Lemma \ref{lem3.2} on the logarithmic derivative, together with and the
assumption
\begin{equation*}
\alpha \left( \log ^{[2]}x\right) =o\left( \beta (\log \gamma \left(
x\right) \right) \text{ as }x=\frac{1}{1-r}\rightarrow +\infty \text{ when }%
r\rightarrow 1^{-},
\end{equation*}%
we obtain%
\begin{align}
\log T\left( r,f^{\prime }\right) & \leq \log T\left( r,f\right) +\log
\left\{ O\left( \log T\left( r,f\right) +\log \left( \frac{1}{1-r}\right)
\right) \right\} +O(1)  \notag \\
& \leq \log T\left( r,f\right) +\log ^{[2]}T\left( r,f\right) +\log
^{[2]}\left( \frac{1}{1-r}\right) +O(1),\text{ }r\notin F_{4}.  \notag
\end{align}%
Using $\left( \text{\ref{3.6}}\right) $, it follows that%
\begin{equation}
\log T\left( r,f^{\prime }\right) \leq \alpha ^{-1}\left[ \left( \varrho
+4\varepsilon \right) \beta \left( \log \gamma \left( \frac{1}{1-r}\right)
\right) \right] ,\text{ }r\notin F_{4},  \label{3.7}
\end{equation}

where $F_{4}\subset \lbrack 0,1)$ is a set of finite logarithmic measure.

By Lemma \ref{lem3.4}, choosing $d=\frac{1}{2}$ and for all $r\in \lbrack
0,1)$, inequality $\left( \ref{3.7}\right) $ yields
\begin{equation*}
\log T\left( r,f^{\prime }\right) \leq \alpha ^{-1}\left[ \left( \varrho
+4\varepsilon \right) \beta \left( \log \gamma \left( \frac{1}{1-s(r)}%
\right) \right) \right]
\end{equation*}%
\begin{equation*}
\leq \alpha ^{-1}\left[ \left( \varrho +4\varepsilon \right) \beta \left(
\log \gamma \left( \frac{1}{1-\left( 1-\frac{1}{2}\left( 1-r\right) \right) }%
\right) \right) \right]
\end{equation*}%
\begin{equation}
\leq \alpha ^{-1}\left[ \left( \varrho +4\varepsilon \right) \beta \left(
\log \gamma \left( \frac{2}{1-r}\right) \right) \right] .  \label{3.8}
\end{equation}%
Since $\gamma \left( \frac{2}{1-r}\right) \leq 2\gamma \left( \frac{1}{1-r}%
\right) $ and
\begin{equation*}
\beta \left( x+O\left( 1\right) \right) =\left( 1+o\left( 1\right) \right)
\beta \left( x\right) \text{ as }x=\frac{1}{1-r}\rightarrow +\infty \text{
when }r\rightarrow 1^{-},
\end{equation*}%
we obtain
\begin{align*}
\alpha (\log T\left( r,f^{\prime }\right) & \leq \left( \varrho
+4\varepsilon \right) \beta \left( \log \gamma \left( \frac{2}{1-r}\right)
\right) \\
& \leq \left( \varrho +4\varepsilon \right) \beta \left( \log \left( 2\gamma
\left( \frac{1}{1-r}\right) \right) \right) \\
& =\left( \varrho +4\varepsilon \right) \beta \left( \log 2+\log \gamma
\left( \frac{1}{1-r}\right) \right) \\
& =\left( \varrho +4\varepsilon \right) \beta \left( O\left( 1\right) +\log
\gamma \left( \frac{1}{1-r}\right) \right) \\
& =\left( \varrho +4\varepsilon \right) \left( 1+o\left( 1\right) \right)
\beta \left( \log \gamma \left( \frac{1}{1-r}\right) \right) ,
\end{align*}%
Consequently,%
\begin{equation*}
\frac{\alpha (\log T\left( r,f^{\prime }\right) }{\beta \left( \log \gamma
\left( \frac{1}{1-r}\right) \right) }\leq \left( \varrho +4\varepsilon
\right) \left( 1+o\left( 1\right) \right) .
\end{equation*}%
Since $\varepsilon >0$ is arbitrary, we conclude that $\varrho _{(\alpha
,\beta ,\gamma )}[f^{\prime }]\leq \varrho _{(\alpha ,\beta ,\gamma )}[f]$.%
\newline
We now prove the reverse inequality $\varrho _{(\alpha ,\beta ,\gamma
)}[f^{\prime }]\geq \varrho _{(\alpha ,\beta ,\gamma )}[f]$. Let $\varrho
_{(\alpha ,\beta ,\gamma )}[f^{\prime }]=${$\varrho $}$^{\prime }$. By
definition, for any given $\varepsilon >0$ and as $r\rightarrow {1}^{-}$, we
have
\begin{equation}
\log T\left( r,f^{\prime }\right) \leq \exp \left\{ \alpha ^{-1}\left[
\left( {\varrho }^{\prime }+\varepsilon \right) \beta \left( \log \gamma
\left( \frac{1}{1-r}\right) \right) \right] \right\} .  \label{3.9}
\end{equation}%
By a result of C.T. Chuang, see [\cite{14}, Theorem 4.1], see also [\cite{13}%
, p. 281]
\begin{equation}
T\left( r,f\right) \leq O\left( T\left( \frac{r+3}{4},f^{\prime }\right)
+\log \left( \frac{1}{1-r}\right) \right) ,\text{ }r\rightarrow {1}^{-}.
\label{3.10}
\end{equation}%
Using $\gamma \left( \frac{4}{1-r}\right) \leq 4\gamma \left( \frac{1}{1-r}%
\right) $ together with $\beta \left( x+O\left( 1\right) \right) =\left(
1+o\left( 1\right) \right) \beta \left( x\right) $ and the assumption
\begin{equation*}
\alpha \left( \log ^{[2]}x\right) =o\left( \beta (\log \gamma \left(
x\right) \right) \text{ as }x=\frac{1}{1-r}\rightarrow +\infty \text{ when }%
r\rightarrow 1^{-},
\end{equation*}%
we deduce from $\left( \ref{3.9}\right) $ and $\left( \ref{3.10}\right) $
that
\begin{align*}
T\left( r,f\right) & \leq O\left( \exp \left\{ \alpha ^{-1}\left[ \left( {%
\varrho }^{\prime }+\varepsilon \right) )\beta \left( \log \gamma \left(
\frac{4}{1-r}\right) \right) \right] \right\} +\log \left( \frac{1}{1-r}%
\right) \right) \\
& \leq \exp \left\{ \alpha ^{-1}\left[ \left( {\varrho }^{\prime
}+2\varepsilon \right) \beta \left( \log \left( \gamma \left( \frac{4}{1-r}%
\right) \right) \right) \right] \right\} \\
& \leq \exp \left\{ \alpha ^{-1}\left[ \left( {\varrho }^{\prime
}+2\varepsilon \right) \left( 1+o\left( 1\right) \right) \beta \left( \log
\gamma \left( \frac{1}{1-r}\right) \right) \right] \right\} .
\end{align*}%
Consequently,%
\begin{equation*}
\log T\left( r,f\right) \leq \alpha ^{-1}\left[ \left( {\varrho }^{\prime
}+2\varepsilon \right) \left( 1+o\left( 1\right) \right) \beta \left( \log
\gamma \left( \frac{1}{1-r}\right) \right) \right] ,
\end{equation*}%
and hence%
\begin{equation*}
\alpha \left( \log T\left( r,f\right) \right) \leq \left( {\varrho }^{\prime
}+2\varepsilon \right) \left( 1+o\left( 1\right) \right) \beta \left( \log
\gamma \left( \frac{1}{1-r}\right) \right) ,\text{ }r\rightarrow {1}^{-}.
\end{equation*}%
Since, $\varepsilon >0$ is arbitrary, this yields $\varrho _{(\alpha ,\beta
,\gamma )}[f^{\prime }]\geq \varrho _{(\alpha ,\beta ,\gamma )}[f].$
Combining both inequalities, we conclude that $\varrho _{(\alpha ,\beta
,\gamma )}[f^{\prime }]=\varrho _{(\alpha ,\beta ,\gamma )}[f]$ which
completes the proof.
\end{proof}

\begin{remark}
\label{r3.1} In accordance with Lemma \ref{lem3.5}, one readily deduces that
$\varrho _{(\alpha (\log ),\beta ,\gamma )}[f^{\prime }]=\varrho _{(\alpha
(\log ),\beta ,\gamma )}[f]$, where $f$ is a meromorphic function in $\Delta
$.
\end{remark}

\begin{lemma}
\label{lem3.6} Let $f$ be an analytic function in the unit disc $\Delta $
such that $0<\varrho _{(\alpha ,\beta ,\gamma ),M}[f]=\varrho _{0}<+\infty $%
. Then, for any $0<\mu <\varrho _{0}$, there exists a set $I\in \lbrack 0,1)$
of infinite logarithmic measure $m_{l}(I)=\int\limits_{I}\frac{dr}{1-r}%
=+\infty ,$ such that for all $r\in I$ one has
\begin{equation*}
\alpha \left( \log ^{[2]}M\left( r,f\right) \right) >\mu \beta \left( \log
\gamma \left( \frac{1}{1-r}\right) \right) .
\end{equation*}
\end{lemma}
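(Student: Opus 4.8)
The plan is to extract from the definition of the limit superior a sequence of radii along which the asserted inequality holds with an intermediate constant, then to enlarge each such radius into an interval of fixed logarithmic length on which the inequality (now with the smaller constant $\mu$) still holds; summing the logarithmic measures of infinitely many such intervals will give $m_l(I)=+\infty$.

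First I would fix a constant $\mu_1$ with $\mu<\mu_1<\varrho_0$. Since
$$\varrho_{(\alpha,\beta,\gamma),M}[f]=\limsup_{r\to 1^-}\frac{\alpha\left(\log^{[2]}M(r,f)\right)}{\beta\left(\log\gamma\left(\frac{1}{1-r}\right)\right)}=\varrho_0>\mu_1,$$
there exists a strictly increasing sequence $r_n\uparrow 1$ with
$$\alpha\left(\log^{[2]}M(r_n,f)\right)>\mu_1\,\beta\left(\log\gamma\left(\tfrac{1}{1-r_n}\right)\right)\qquad(n=1,2,\dots).$$
After passing to a subsequence I may assume $r_{n+1}>1-\tfrac12(1-r_n)$, so that the intervals $I_n:=\left[r_n,\,1-\tfrac12(1-r_n)\right]$ are pairwise disjoint, and I set $I:=\left\{r\in[0,1):\alpha\left(\log^{[2]}M(r,f)\right)>\mu\,\beta\left(\log\gamma\left(\tfrac{1}{1-r}\right)\right)\right\}$, which is the set the lemma asks for.

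Next I would show that $I_n\subset I$ for all large $n$. Fix $r\in I_n$. Because $M(\cdot,f)$ is non-decreasing and $\alpha$, $\log$ are non-decreasing, the left-hand side satisfies $\alpha\left(\log^{[2]}M(r,f)\right)\ge\alpha\left(\log^{[2]}M(r_n,f)\right)>\mu_1\,\beta\left(\log\gamma\left(\tfrac{1}{1-r_n}\right)\right)$. For the right-hand side, $r\le 1-\tfrac12(1-r_n)$ gives $\tfrac{1}{1-r}\le\tfrac{2}{1-r_n}$, and since $\gamma\in L_3$ we have $\gamma\left(\tfrac{2}{1-r_n}\right)\le 2\gamma\left(\tfrac{1}{1-r_n}\right)$, so
$$\mu\,\beta\left(\log\gamma\left(\tfrac{1}{1-r}\right)\right)\le\mu\,\beta\left(\log 2+\log\gamma\left(\tfrac{1}{1-r_n}\right)\right)=\mu\,(1+o(1))\,\beta\left(\log\gamma\left(\tfrac{1}{1-r_n}\right)\right)$$
as $n\to\infty$, using $\beta\in L_2$ together with $\log\gamma\left(\tfrac{1}{1-r_n}\right)\to+\infty$. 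Since $\mu<\mu_1$, for $n$ large the factor $\mu(1+o(1))$ is $<\mu_1$, and the two estimates combine to give $\alpha\left(\log^{[2]}M(r,f)\right)>\mu\,\beta\left(\log\gamma\left(\tfrac{1}{1-r}\right)\right)$, i.e. $r\in I$. Finally, $m_l(I_n)=\int_{r_n}^{1-\frac12(1-r_n)}\frac{dr}{1-r}=\log 2$, and since the $I_n$ are disjoint and eventually contained in $I$, one gets $m_l(I)\ge\sum_{n\ge N}\log 2=+\infty$.

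The only delicate point is the uniform control of the right-hand side across the whole interval $I_n$: this is exactly where the subadditivity of $\gamma$ and the $L_2$-regularity of $\beta$ are used, and it is essential that $\mu_1$ be chosen strictly between $\mu$ and $\varrho_0$ so that the stray factor $1+o(1)$ can be absorbed into the gap $\mu_1-\mu$. Everything else is just monotonicity of $M(r,f)$ and an elementary logarithmic integral.
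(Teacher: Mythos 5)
Your proof is correct, and it follows the same overall architecture as the paper's: extract a sequence $\{r_n\}$ from the limit superior, fatten each $r_n$ into an interval on which the inequality survives with the smaller constant $\mu$, and sum the logarithmic measures of disjoint intervals. The one genuine difference is in how the intervals are built and controlled. The paper uses the shrinking intervals $I_m=\left[r_m,\,1-\left(1-\frac{1}{m}\right)(1-r_m)\right]$, each of logarithmic measure $\log\frac{m}{m-1}$, so that divergence comes from the harmonic series and the right-hand side is controlled via the ratio $\beta\left(\log\gamma\left(\left(1-\frac{1}{m}\right)\frac{1}{1-r}\right)\right)/\beta\left(\log\gamma\left(\frac{1}{1-r}\right)\right)\to 1$. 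You instead take intervals of fixed logarithmic length $\log 2$ and pay for the longer reach by invoking the subadditivity $\gamma\left(\frac{2}{1-r_n}\right)\le 2\gamma\left(\frac{1}{1-r_n}\right)$ together with $\beta(x+O(1))=(1+o(1))\beta(x)$, absorbing the $1+o(1)$ into the gap $\mu_1-\mu$. Your version is slightly cleaner and makes explicit which structural hypotheses ($\gamma\in L_3$, $\beta\in L_2$) are doing the work, whereas the paper's ratio-limit step is stated without justification; the trade-off is that your argument genuinely needs those hypotheses, while the shrinking-interval device would in principle get by with less.
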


\begin{proof}
By the definition of the upper limit, there exists an increasing sequence $%
\{r_{m}\}$\ with $r_{m}\rightarrow 1^{-}$ as $m\rightarrow \infty $ such
that
\begin{equation*}
1-\left( 1-\frac{1}{m}\right) (1-r_{m})<r_{m+1}\text{ }
\end{equation*}%
and
\begin{equation*}
\underset{m\rightarrow +\infty }{\lim }~\frac{\alpha \left( \log
^{[2]}M\left( r_{m},f\right) \right) }{\beta \left( \log \gamma \left( \frac{%
1}{1-r_{m}}\right) \right) }=\varrho _{0}.
\end{equation*}%
Hence, there exists an integer $m_{0}$ such that for all $m\geq m_{0}$ and
for any given $\varepsilon $ satisfying$~0<\varepsilon <\varrho _{0}-\mu $
\begin{equation}
\alpha \left( \log ^{[2]}M\left( r_{m},f\right) \right) >\left( \varrho
_{0}-\varepsilon \right) \beta \left( \log \gamma \left( \frac{1}{1-r_{m}}%
\right) \right) .  \label{3.11}
\end{equation}%
For $r\in \left[ r_{m},1-\left( 1-\frac{1}{m}\right) \left( 1-r_{m}\right) %
\right] ,$ we observe that%
\begin{equation*}
\underset{m\rightarrow +\infty }{\lim }\frac{\beta \left( \log \gamma \left(
\left( 1-\frac{1}{m}\right) \frac{1}{1-r}\right) \right) }{\beta \left( \log
\gamma \left( \frac{1}{1-r}\right) \right) }=1.
\end{equation*}%
Therefore, for a given $\mu $ with $0<\mu <\varrho _{0}-\varepsilon $, there
exists an integer $m_{1}$ such that for $m\geq m_{1}$ we have
\begin{equation}
\frac{\beta \left( \log \gamma \left( \left( 1-\frac{1}{m}\right) \frac{1}{%
1-r}\right) \right) }{\beta \left( \log \gamma \left( \frac{1}{1-r}\right)
\right) }>\frac{\mu }{\varrho _{0}-\varepsilon }.  \label{3.12}
\end{equation}%
Combining $\left( \ref{3.11}\right) $ and $\left( \ref{3.12}\right) $ for
all $m\geq m_{2}:=\max \left\{ {m_{0},m_{1}}\right\} $ and for any
\begin{equation*}
r\in \left[ r_{m},1-\left( 1-\frac{1}{m}\right) \left( 1-r_{m}\right) \right]
,
\end{equation*}%
we obtain
\begin{align*}
\alpha \left( \log ^{[2]}M\left( r,f\right) \right) & \geq \alpha \left(
\log ^{[2]}M\left( r_{m},f\right) \right) >\left( \varrho _{0}-\varepsilon
\right) \beta \left( \log \gamma \left( \frac{1}{1-r_{m}}\right) \right)  \\
& >\left( \varrho _{0}-\varepsilon \right) \beta \left( \log \gamma \left(
\left( 1-\frac{1}{m}\right) \frac{1}{1-r}\right) \right)  \\
& >\left( \varrho _{0}-\varepsilon \right) \frac{\mu }{\varrho
_{0}-\varepsilon }\beta \left( \log \gamma \left( \frac{1}{1-r}\right)
\right) =\mu \beta \left( \log \gamma \left( \frac{1}{1-r}\right) \right) .
\end{align*}%
Finally, let $I=\underset{m=m_{2}}{\overset{+\infty }{\cup }}I_{m}$ where $%
I_{m}=\left[ r_{m},1-\left( 1-\frac{1}{m}\right) \left( 1-r_{m}\right) %
\right] $. Then,
\begin{equation*}
m_{l}(I)=\sum_{m=m_{2}}^{+\infty }\int\limits_{I_{m}}\frac{dr}{1-r}%
=\sum_{m=m_{2}}^{+\infty }\log \left( \frac{m}{m-1}\right) =+\infty .
\end{equation*}%
This completes the proof.
\end{proof}

We can also prove the following result by using similar reason as in the
proof of Lemma \ref{lem3.6}.

\begin{lemma}
\label{lem3.7} let $f$ be an analytic function in $\Delta $ with $\varrho
_{(\alpha ,\beta ,\gamma ),M}[f]=\varrho _{0}\in (0,+\infty )$ and $\tau
_{(\alpha ,\beta ,\gamma ),M}[f]\in (0,+\infty )$. Then for any given $%
\omega <\tau _{(\alpha ,\beta ,\gamma ),M}[f]$, there exists a set $%
I_{1}\subset (0,1)$ of infinite logarithmic measure such that for all $r\in
I_{1}$
\begin{equation*}
\exp \left\{ \alpha \left( \log ^{[2]}\left( M(r,f)\right) \right) \right\}
>\omega \left( \exp \left\{ \beta \left( \log \left( \gamma \left( \frac{1}{%
1-r}\right) \right) \right) \right\} \right) ^{\varrho _{0}}.
\end{equation*}
\end{lemma}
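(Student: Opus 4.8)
The plan is to mimic the proof of Lemma \ref{lem3.6}, replacing the order estimate used there by the corresponding type estimate coming from Definition \ref{d1.3}. Throughout I write $\tau _{0}=\tau _{(\alpha ,\beta ,\gamma ),M}[f]$, and for brevity put $B(r)=\beta \left( \log \gamma \left( \frac{1}{1-r}\right) \right) $ and $U(r)=\exp \left\{ \alpha \left( \log ^{[2]}M(r,f)\right) \right\} $. Since $M(r,f)$ is non-decreasing (maximum modulus principle) and $\alpha $, $\log ^{[2]}$, $\exp $ are increasing, $U$ is non-decreasing for $r$ near $1$. As the asserted inequality is trivial when $\omega \leq 0$ (its left-hand side being positive), I may assume $0<\omega <\tau _{0}$.

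First I would extract the sequence. By the definition of $\tau _{0}$ as an upper limit, there is an increasing sequence $\{r_{m}\}$ with $r_{m}\rightarrow 1^{-}$ such that $1-\left( 1-\frac{1}{m}\right) (1-r_{m})<r_{m+1}$ for every $m$ (so that the intervals $I_{m}:=\left[ r_{m},\,1-\left( 1-\frac{1}{m}\right) (1-r_{m})\right] $ are pairwise disjoint) and $U(r_{m})/\left( \exp \{B(r_{m})\}\right) ^{\varrho _{0}}\rightarrow \tau _{0}$. Fixing $\varepsilon $ with $0<\varepsilon <\tau _{0}-\omega $, there is $m_{0}$ with
\begin{equation*}
U(r_{m})>(\tau _{0}-\varepsilon )\left( \exp \{B(r_{m})\}\right) ^{\varrho _{0}}\qquad (m\geq m_{0}).
\end{equation*}

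Next I would transport this from the point $r_{m}$ to the whole of $I_{m}$. For $r\in I_{m}$ one has $r\geq r_{m}$, hence $U(r)\geq U(r_{m})$, and the elementary inequality $\frac{1}{1-r_{m}}\geq \left( 1-\frac{1}{m}\right) \frac{1}{1-r}$. Since $\gamma $ is non-decreasing and, being in $L_{3}$, satisfies $\gamma (2x)\leq 2\gamma (x)$ for $x$ large, and since $1-\frac{1}{m}\geq \frac{1}{2}$ for $m\geq 2$, it follows that $\gamma \left( \frac{1}{1-r}\right) \leq 2\gamma \left( \left( 1-\frac{1}{m}\right) \frac{1}{1-r}\right) \leq 2\gamma \left( \frac{1}{1-r_{m}}\right) $, so $\log \gamma \left( \frac{1}{1-r}\right) \leq \log \gamma \left( \frac{1}{1-r_{m}}\right) +O(1)$; as $\beta \in L_{2}$, this gives $B(r)=(1+o(1))B(r_{m})$ as $m\rightarrow \infty $, uniformly in $r\in I_{m}$. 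If one can take $m_{1}$ so large that $(\tau _{0}-\varepsilon )\left( \exp \{B(r_{m})\}/\exp \{B(r)\}\right) ^{\varrho _{0}}>\omega $ for all $m\geq m_{1}$ and all $r\in I_{m}$, then for every $m\geq m_{2}:=\max \{m_{0},m_{1}\}$ and every $r\in I_{m}$,
\begin{equation*}
U(r)\geq U(r_{m})>(\tau _{0}-\varepsilon )\left( \exp \{B(r_{m})\}\right) ^{\varrho _{0}}>\omega \left( \exp \{B(r)\}\right) ^{\varrho _{0}},
\end{equation*}
and then $I_{1}=\bigcup_{m\geq m_{2}}I_{m}$, which has $m_{l}(I_{1})=\sum_{m\geq m_{2}}\log \frac{m}{m-1}=+\infty $, is the required set.

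The delicate step — and the one I expect to be the real obstacle — is exactly this transport from $r_{m}$ to $I_{m}$: because the exponent $\varrho _{0}$ stands \emph{outside} the exponential, it does not suffice to know that $B(r_{m})/B(r)\rightarrow 1$; what one truly needs is that the difference $B(r)-B(r_{m})$ stays below the explicit constant $\frac{1}{\varrho _{0}}\log \frac{\tau _{0}-\varepsilon }{\omega }>0$ all along $I_{m}$. If the intervals $I_{m}$ above do not already secure this, one should cut each $I_{m}$ off as soon as $B$ has risen by that amount (or replace $1-\frac{1}{m}$ by a more slowly decaying $1-\varepsilon _{m}$), and then re-verify that $\sum_{m}m_{l}(I_{m})=+\infty $; here the facts that $\gamma \in L_{3}$ forces $\log \gamma \left( \frac{1}{1-r}\right) =O\!\left( \log \frac{1}{1-r}\right) $ and that $\beta \in L_{2}$ controls the oscillation of $\beta $ along such intervals are precisely what keep the total logarithmic measure infinite.
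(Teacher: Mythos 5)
Your overall strategy is exactly the one the paper intends: the paper gives no separate proof of Lemma \ref{lem3.7}, stating only that it follows ``by using similar reason as in the proof of Lemma \ref{lem3.6}'', and your argument is precisely that adaptation --- the same sequence $\{r_m\}$, the same intervals $I_m=\left[ r_m,\,1-\left(1-\frac{1}{m}\right)(1-r_m)\right]$, the same appeal to the monotonicity of $M(r,f)$, and the same computation of the logarithmic measure. You have also put your finger on the one step that does \emph{not} carry over from Lemma \ref{lem3.6}, and your diagnosis is correct: it is a genuine gap, not a technicality. In Lemma \ref{lem3.6} the conclusion compares $\alpha\left(\log^{[2]}M(r,f)\right)$ with a \emph{multiple} of $B(r)=\beta\left(\log\gamma\left(\frac{1}{1-r}\right)\right)$, so the ratio $B(r_m)/B(r)\rightarrow 1$ (which is what $\gamma\in L_3$ and $\beta\in L_2$ deliver) suffices. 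In Lemma \ref{lem3.7} the quantity $B$ sits inside $\left(\exp\{\cdot\}\right)^{\varrho_0}$, so one needs the \emph{difference} $B(r)-B(r_m)$ to stay below the fixed constant $\frac{1}{\varrho_0}\log\frac{\tau_0-\varepsilon}{\omega}$ uniformly on $I_m$. The hypotheses only give $B(r)\leq (1+o(1))B(r_m)$, and since $B(r_m)\rightarrow+\infty$ the error $o(1)B(r_m)$ need not tend to $0$: already for the admissible triple $\alpha(x)=x$, $\beta(x)=x^{2}$, $\gamma(x)=x$ one has $\beta(x+\log 2)-\beta(x)=2x\log 2+(\log 2)^{2}\rightarrow+\infty$, so the chain $U(r)\geq U(r_m)>(\tau_0-\varepsilon)\left(\exp\{B(r_m)\}\right)^{\varrho_0}>\omega\left(\exp\{B(r)\}\right)^{\varrho_0}$ breaks at the last link.

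Your proposed repair --- truncating each $I_m$ where $B$ has risen by the allowed amount --- is the natural one, but it is not carried out, and it is not clear that it can be under the stated hypotheses: with $\beta(x)=x^{2}$, $\gamma(x)=x$ and $t=\log\frac{1}{1-r}$, the admissible increment of $t$ on the truncated interval is of size about $\delta/(2t_m)$, so the total logarithmic measure is about $\sum_m \delta/(2t_m)$, which converges whenever $t_m=\log\frac{1}{1-r_m}$ grows geometrically --- and the sequence $r_m$ is dictated by the $\limsup$, not chosen by you. So the lemma as stated appears to need either an extra hypothesis (for instance $\beta(x+O(1))=\beta(x)+o(1)$, or concavity of $\gamma$ combined with slow growth of $\beta$) or a genuinely different argument. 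In short: your proof is incomplete at exactly the step you flagged, but the same incompleteness is present in the paper's ``similar reasoning'', and your analysis of where and why it fails is accurate.
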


\begin{lemma}
\label{lem3.8} (\cite{21}) Let $f$ be a solution of equation $\left( \ref%
{1.1}\right) $, where the coefficients $A_{j}\left( z\right) $ $%
(j=0,...,k-1) $ are analytic functions on the disc $\Delta _{R}=\left\{ z\in
\mathbb{C}:|z|<R\right\} ,0<R\leq \infty $. Let $n_{c}\in \{1,...,k\}$ be
the number of nonzero coefficients $A_{j}\left( z\right) $ $(j=0,...,k-1)$,
and let $\theta \in \lbrack 0,2\pi ]$ and $\varepsilon >0$. If $z_{\theta
}=\nu e^{i\theta }\in \Delta _{R}$ is such that $A_{j}\left( z_{\theta
}\right) \neq 0$ for some $j=0,...,k-1$, then for all $\nu <r<R$,
\begin{equation*}
\left\vert f(re^{i\theta })\right\vert \leq C\exp \left(
n_{c}\int\limits_{\nu }^{r}\max_{j=0,...,k-1}\left\vert A_{j}(te^{i\theta
})\right\vert ^{\frac{1}{k-j}}dt\right) ,
\end{equation*}%
where $C>0$ is a constant satisfying
\begin{equation*}
C\leq (1+\varepsilon )\max_{j=0,...,k-1}\left( \frac{|f^{(j)}(z_{(\theta )}|%
}{(n_{c})^{j}\max_{j=0,...,k-1}|A_{j}(z_{\theta })|^{\frac{j}{k-n}}}\right) .
\end{equation*}
\end{lemma}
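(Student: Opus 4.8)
The plan is to reduce the estimate to a one–dimensional Gronwall argument along the ray $\{te^{i\theta}:\nu\le t<R\}$. First I set $g(t)=f(te^{i\theta})$, so that $g^{(j)}(t)=e^{ij\theta}f^{(j)}(te^{i\theta})$; substituting into $(\ref{1.1})$ shows that $g$ solves a linear differential equation of order $k$ whose coefficients $\widetilde A_{j}(t):=e^{i(k-j)\theta}A_{j}(te^{i\theta})$ satisfy $|\widetilde A_{j}(t)|=|A_{j}(te^{i\theta})|$. Writing $U=(g,g',\dots,g^{(k-1)})^{T}$ turns this into a first-order system $U'=B(t)U$, where $B(t)$ is the companion matrix of the equation (ones on the superdiagonal, $-\widetilde A_{0},\dots,-\widetilde A_{k-1}$ in the last row). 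The whole argument then takes place on the segment $\{te^{i\theta}:\nu\le t\le r\}\subset\Delta_{R}$.

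The crucial step is a diagonal rescaling adapted to $\delta(t):=\max_{0\le j\le k-1}|A_{j}(te^{i\theta})|^{1/(k-j)}$. I would put $D(t)=\operatorname{diag}\bigl(1,n_{c}\delta(t),(n_{c}\delta(t))^{2},\dots,(n_{c}\delta(t))^{k-1}\bigr)$ and $V=D^{-1}U$, so that $V'=(D^{-1}BD-D^{-1}D')V$. Every superdiagonal entry of $D^{-1}BD$ equals $n_{c}\delta(t)$, and its last row has entries $-\widetilde A_{j}(t)(n_{c}\delta(t))^{j-k+1}$; since $|\widetilde A_{j}(t)|=|A_{j}(te^{i\theta})|\le\delta(t)^{k-j}$ by the very definition of $\delta$, and since $n_{c}\ge1$, each such entry has modulus $\le\delta(t)$, while at most $n_{c}$ of them are non-zero (the number of coefficients $A_{j}\not\equiv0$). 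Hence all row sums of $D^{-1}BD$ are $\le n_{c}\delta(t)$, i.e. $\|D^{-1}BD\|_{\infty}\le n_{c}\delta(t)$. Ignoring for a moment the diagonal correction $D^{-1}D'=(\delta'/\delta)\operatorname{diag}(0,1,\dots,k-1)$, Gronwall's inequality applied to $\|V(t)\|_{\infty}$, followed by $|g(r)|=|V_{0}(r)|\le\|V(r)\|_{\infty}$ and $\|V(\nu)\|_{\infty}=\max_{0\le j\le k-1}|f^{(j)}(z_{\theta})|/(n_{c}\delta(\nu))^{j}$, gives
\begin{equation*}
|f(re^{i\theta})|\le\Bigl(\max_{0\le j\le k-1}\frac{|f^{(j)}(z_{\theta})|}{(n_{c}\delta(\nu))^{j}}\Bigr)\exp\Bigl(n_{c}\int_{\nu}^{r}\delta(t)\,dt\Bigr),
\end{equation*}
which is exactly the asserted bound: the hypothesis that some $A_{j}(z_{\theta})\ne0$ is used precisely to guarantee $\delta(\nu)>0$, so that $D(\nu)$ is invertible and the prefactor is finite, and the harmless factor $1+\varepsilon$ comes from using the integral form of Gronwall's inequality.

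The part that needs care — and which I expect to be the main obstacle — is the contribution of the diagonal term $D^{-1}D'$, which is not absolutely integrable in general, together with the isolated points of the ray where all $A_{j}$ vanish so that $\delta=0$ and $D$ degenerates. Those zeros are isolated (each $A_{j}\not\equiv0$ is analytic), so I would run the estimate on each subinterval between consecutive such points and patch by continuity. For the correction term I would either replace $\delta$ by a non-decreasing majorant, so that $\int_{\nu}^{r}|(\log\delta)'|$ collapses to $\log(\delta(r)/\delta(\nu))$ and contributes only a power of $\delta(r)$ — which is absorbed into the exponential since analyticity forces $\log\delta(r)=o\bigl(\int_{\nu}^{r}\delta\bigr)$ as $\delta(r)\to\infty$, a Cauchy-type mini-estimate along the ray — possibly combined with Lemma \ref{lem3.4} to discard any exceptional set produced by the majorization; or, alternatively, replace the diagonal-conjugation step by the classical successive-integration scheme of Gundersen--Steinbart--Wang, which never differentiates the rescaling and directly yields the same bound. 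Either route changes $\delta$ only by lower-order quantities, so the exponent $n_{c}\int_{\nu}^{r}\max_{0\le j\le k-1}|A_{j}(te^{i\theta})|^{1/(k-j)}\,dt$ and the form of the constant $C$ are preserved.
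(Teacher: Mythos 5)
First, a point of comparison: the paper itself contains no proof of Lemma \ref{lem3.8} --- it is quoted (typographical slips and all) from Heittokangas--Korhonen--R\"atty\"a \cite{21} --- so your argument can only be judged on its own merits. Your overall strategy, passing to the companion system $U'=B(t)U$ along the ray and conjugating by $D(t)=\operatorname{diag}\bigl(1,n_{c}\delta(t),\dots,(n_{c}\delta(t))^{k-1}\bigr)$ before applying Gronwall in the $\ell^{\infty}$ norm, is the natural one, and the algebraic core is correct: each of the first $k-1$ rows of $D^{-1}BD$ has the single entry $n_{c}\delta(t)$, and the last row has at most $n_{c}$ entries each of modulus at most $\delta(t)$, so all row sums are at most $n_{c}\delta(t)$; this is exactly where the sharp constant $n_{c}$ in the exponent comes from.

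The difficulty you yourself single out --- the diagonal correction $D^{-1}D'=(\delta'/\delta)\operatorname{diag}(0,1,\dots,k-1)$ --- is, however, a genuine gap, and neither proposed repair closes it. (a) Replacing $\delta$ by the non-decreasing majorant $\tilde\delta(t)=\max\{\delta(\nu),\sup_{\nu\le s\le t}\delta(s)\}$ changes the Gronwall exponent to $n_{c}\int_{\nu}^{r}\tilde\delta\,dt$, which can exceed the asserted $n_{c}\int_{\nu}^{r}\delta\,dt$ by an arbitrarily large amount (a tall narrow spike of $\delta$ just to the right of $\nu$ forces $\tilde\delta$ to stay large on all of $[\nu,r]$), so the stated inequality is not what you would prove; moreover the leftover factor from $\int_{\nu}^{r}(\log\tilde\delta)'$ is $(\tilde\delta(r)/\tilde\delta(\nu))^{k-1}$, which blows up, e.g., when $\delta(\nu)$ is small, while $\exp(n_{c}\int_{\nu}^{r}\delta)$ stays bounded --- so it can be absorbed neither into the exponential (the claimed estimate $\log\delta(r)=o(\int_{\nu}^{r}\delta)$ is unjustified and fails in this regime) nor into $C$, whose stated bound already carries the full allowance of $\delta(\nu)^{-j}$. (b) Appealing to the ``successive-integration scheme'' of the literature is, in effect, citing the result you are trying to prove. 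The treatment of interior zeros of $\delta$ has the same defect: restarting the estimate at a point $\nu'$ with $\delta(\nu')=0$ makes the new prefactor $\max_{j}|f^{(j)}|/(n_{c}\delta(\nu'))^{j}$ infinite for $j\ge1$, so ``patching by continuity'' yields nothing. Any correct proof (in particular the one in \cite{21}) must integrate through such points without ever differentiating the weight or dividing by $\delta(t)$ at interior $t$; the form of the constant --- depending only on data at $z_{\theta}$, up to a factor $1+\varepsilon$ --- is itself the signal that a time-dependent rescaling that gets differentiated cannot be the right device.
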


\begin{lemma}
\label{lem3.9} Let $A_{0}(z),A_{1}(z),...,A_{k-1}(z)$ be analytic functions
in the disc $\Delta $. Then every non-trivial solution $f$ of $\left( \ref%
{1.1}\right) $ satisfies
\begin{equation*}
\varrho _{(\alpha (\log ),\beta ,\gamma ),M}[f]\leq \max \left\{ \varrho
_{(\alpha ,\beta ,\gamma ),M}[A_{j}],j=0,...,k-1\right\} .
\end{equation*}
\end{lemma}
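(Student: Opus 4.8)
The plan is to combine the pointwise growth estimate of Lemma \ref{lem3.8} with the definition of the $(\alpha,\beta,\gamma)$-order of the coefficients, and then to strip off the iterated logarithms one at a time. Two degenerate situations are disposed of at once: if all the $A_{j}$ vanish identically, then $(\ref{1.1})$ forces $f$ to be a polynomial and the inequality is trivial, so we may assume that the number $n_{c}$ of nonzero coefficients satisfies $n_{c}\geq 1$; and if $\rho:=\max\{\varrho_{(\alpha,\beta,\gamma),M}[A_{j}]:j=0,\dots,k-1\}=+\infty$ there is nothing to prove, so we may also assume $0\leq \rho<+\infty$.

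The first genuine step is to render the constant $C$ in Lemma \ref{lem3.8} uniform in the argument. Fix $j_{0}$ with $A_{j_{0}}\not\equiv 0$; since its zeros are isolated, there is $\nu_{0}\in(0,1)$ with $A_{j_{0}}$ zero-free on $\{|z|=\nu_{0}\}$, so $0<\delta:=\min_{|z|=\nu_{0}}|A_{j_{0}}(z)|$ and all the quantities $\max_{|z|=\nu_{0}}|A_{j}(z)|$, $\max_{|z|=\nu_{0}}|f^{(j)}(z)|$ are finite. Feeding the points $z_{\theta}=\nu_{0}e^{i\theta}$ into Lemma \ref{lem3.8}, the stated bound for $C$ involves only the $|f^{(j)}(z_{\theta})|$ and fixed powers of $\max_{0\leq i\leq k-1}|A_{i}(z_{\theta})|$, a quantity bounded below by $\delta$ and above by a finite constant; hence $C$ can be majorised by a constant $C_{0}<+\infty$ independent of $\theta$. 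Replacing $|A_{j}(te^{i\theta})|$ by $M(t,A_{j})$ and taking the maximum over $\theta$ then yields
\[
M(r,f)\leq C_{0}\exp\!\Big(n_{c}\int_{\nu_{0}}^{r}\max_{0\leq j\leq k-1}M(t,A_{j})^{1/(k-j)}\,dt\Big),\qquad \nu_{0}<r<1 .
\]

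Now fix $\varepsilon>0$. By the definition of $\varrho_{(\alpha,\beta,\gamma),M}[A_{j}]$ and of $\rho$, for every $j$ and all $t$ close enough to $1$,
\[
M(t,A_{j})\leq \exp^{[2]}\!\Big\{\alpha^{-1}\Big[(\rho+\varepsilon)\,\beta\Big(\log\gamma\Big(\frac{1}{1-t}\Big)\Big)\Big]\Big\}=:\exp^{[2]}\{\psi(t)\};
\]
since $1/(k-j)\leq 1$ and $\psi$ is increasing, while the portion of the integral near $\nu_{0}$ contributes only an $O(1)$ inside the exponential, we get $M(r,f)\leq C_{1}\exp\!\big(n_{c}\exp^{[2]}\{\psi(r)\}\big)$ for some constant $C_{1}$. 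Applying $\log$ three times and using $\psi(r)\to+\infty$ as $r\to 1^{-}$ gives $\log^{[3]}M(r,f)\leq \psi(r)+O(1)$. Applying $\alpha$, together with monotonicity and the $L_{1}$-inequality $\alpha(a+b)\leq \alpha(a)+\alpha(b)+c$, then yields
\[
\alpha\big(\log^{[3]}M(r,f)\big)\leq (\rho+\varepsilon)\,\beta\Big(\log\gamma\Big(\frac{1}{1-r}\Big)\Big)+O(1).
\]
Dividing by $\beta\big(\log\gamma(\frac{1}{1-r})\big)$, which tends to $+\infty$, passing to $\limsup_{r\to 1^{-}}$ and then letting $\varepsilon\to 0^{+}$, we conclude $\varrho_{(\alpha(\log),\beta,\gamma),M}[f]\leq \rho$, as claimed.

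The iterated-logarithm bookkeeping and the auxiliary facts $\gamma(2x)\leq 2\gamma(x)$ and $\beta(x+O(1))\sim\beta(x)$ are routine; the step I expect to require the most care, and which I would settle first, is the uniformity in $\theta$ of the constant in Lemma \ref{lem3.8}, which is precisely why a circle $\{|z|=\nu_{0}\}$ free of zeros of a non-vanishing coefficient is selected at the outset.
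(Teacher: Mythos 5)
Your proof is correct and follows essentially the same route as the paper: apply Lemma \ref{lem3.8}, majorise the coefficients via the definition of $\varrho_{(\alpha,\beta,\gamma),M}[A_{j}]$, and peel off the iterated logarithms using the standing hypotheses on $\alpha$, $\beta$, $\gamma$. You are in fact more careful than the paper on two points it glosses over --- the uniformity in $\theta$ of the constant $C$ (via the zero-free circle $\{|z|=\nu_{0}\}$) and the degenerate cases $n_{c}=0$ and $\rho=+\infty$ --- so no gap.
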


\begin{proof}
Set $\varrho =\max \left\{ \varrho _{(\alpha ,\beta ,\gamma
),M}[A_{j}],j=0,...,k-1\right\} $. Let $f\not\equiv 0$ be a solution of $%
\left( \ref{1.1}\right) $. Let $\theta _{0}\in \lbrack 0,2\pi ]$ be such
that $|f(re^{i\theta })|=M(r,f)$ \newline
By Lemma \ref{lem3.8}, we have
\begin{align}
M(r,f)& \leq C\exp \left( n_{c}\int\limits_{\nu
}^{r}\max_{j=0,...,k-1}\left\vert A_{j}(te^{i\theta })\right\vert ^{\frac{1}{%
k-j}}dt\right)  \notag \\
& \leq C\exp \left( n_{c}\int\limits_{\nu }^{r}\max_{j=0,...,k-1}\left(
M(r,A_{j})\right) ^{\frac{1}{k-j}}dt\right)  \notag \\
& \leq C\exp \left( n_{c}\left( r-\nu \right)
\max_{j=0,...,k-1}M(r,A_{j})\right) .  \label{3.13}
\end{align}%
By the definition of $\varrho _{(\alpha ,\beta ,\gamma ),M}[A_{j}]$,
\begin{equation}
M(r,A_{j})\leq \exp ^{[2]}\left\{ \alpha ^{-1}\left( \left( \varrho +\frac{%
\varepsilon }{2}\right) \beta \left( \log \gamma \left( \frac{1}{1-r}\right)
\right) \right) \right\} ,\text{ }j=0,...,k-1  \label{3.14}
\end{equation}%
holds for any $\varepsilon >0$. Hence from $\left( \ref{3.12}\right) $ and $%
\left( \ref{3.13}\right) $ we obtain
\begin{equation*}
\varrho _{(\alpha (\log ),\beta ,\gamma ),M}[f]\leq \varrho +\varepsilon .
\end{equation*}%
Since $\varepsilon >0$ is arbitrary, we have
\begin{equation*}
\varrho _{(\alpha (\log ),\beta ,\gamma ),M}[f]\leq \max \left\{ \varrho
_{(\alpha ,\beta ,\gamma ),M}[A_{j}],j=0,...,k-1\right\}
\end{equation*}
\end{proof}

\section{Proof of the Main Results}

\textbf{Proof of Theorem \ref{teo2.1}.}

\begin{proof}
Set $\varrho _{j}=\varrho _{(\alpha ,\beta ,\gamma ),M}[A_{j}]$ $\left(
j=0,...,k-1\right) $ and $\varrho _{f}=\varrho _{(\alpha (\log ),\beta
,\gamma ),M}[f].$\newline
First, we prove that $\varrho _{f}\geq \varrho _{0}$. Suppose the contrary $%
\varrho _{f}<\varrho _{0}$. Let $f\not\equiv 0$ be a solution of the
equation $\left( \ref{1.1}\right) $. In accordance with $\left( \ref{1.1}%
\right) $ we have
\begin{equation}
|A_{0}(z)|\leq \left\vert \frac{f^{(k)}(z)}{f(z)}\right\vert
+|A_{k-1}(z)|\left\vert \frac{f^{(k-1)}(z)}{f(z)}\right\vert +\cdots
+|A_{1}(z)|\left\vert \frac{f^{\prime }(z)}{f(z)}\right\vert .  \label{4.1}
\end{equation}%
Since the functions $A_{j}$ are analytic in $\Delta $ and satisfy $\varrho
_{j}<\varrho _{0}~\left( j=1,...,k-1\right) $, then there exists a constant $%
\lambda _{1}>0$ such that $\varrho _{j}<\lambda _{1}<\varrho _{0}~~\left(
j=1,...,k-1\right) $. Hence, as $~r\rightarrow 1^{-}$
\begin{equation}
M(r,A_{j})<\exp ^{[2]}\left\{ \alpha ^{-1}\left( \lambda _{1}\left( \beta
\left( \log \gamma \left( \frac{1}{1-r}\right) \right) \right) \right)
\right\} ~~~~~  \label{4.2}
\end{equation}%
Without loss of generality, we may assume that
\begin{equation}
\varrho _{f}<\lambda _{1}<\varrho _{0}.  \label{4.3}
\end{equation}%
Applying Lemma \ref{lem3.6} to the coefficient $A_{0}(z)$ with a constant $%
\lambda _{2}$ satisfying $\lambda _{1}<\lambda _{2}<\varrho _{0}$, we obtain
\begin{equation}
M(r,A_{0})>\exp ^{[2]}\left\{ \alpha ^{-1}\left( \lambda _{2}\left( \beta
\left( \log \gamma \left( \frac{1}{1-r}\right) \right) \right) \right)
\right\} ~~~~r\in I,\text{ \ }r\rightarrow 1^{-},  \label{4.4}
\end{equation}%
where $I\subset \lbrack 0,1)$ is a set of infinite logarithmic measure. The
Lemma \ref{lem3.1} implies the following estimate for $j=1,...,k$
\begin{equation}
\left\vert \frac{f^{(j)}(z)}{f(z)}\right\vert \leq \left( \left( \frac{1}{%
1-|z|}\right) ^{2+2\varepsilon }T\left( s(|z|),f\right) \right)
^{j},~~~|z|\notin F,  \label{4.5}
\end{equation}%
where $F\subset \lbrack 0,1)$ is a set of finite logarithmic measure.\newline
Since $I\setminus F$ has infinite logarithmic measure, there exists a
sequence $\left\{ z_{n}\right\} $ with $|z_{n}|=r_{n}\in I\setminus F$ such
that $r_{n}\rightarrow 1^{-}$. Let $s(|z_{n}|)=R_{n}$. Then $1-|z_{n}|=\frac{%
1}{d}(1-R_{n}),$ $d\in (0,1)$.\newline
Using $\left( \ref{4.2}\right) $, $\left( \ref{4.4}\right) $, $\left( \ref%
{4.5}\right) $ together with assumption $\left( \ref{4.3}\right) $, we
derive from $\left( \ref{4.1}\right) $ that
\begin{align*}
& \exp ^{[2]}\left\{ \alpha ^{-1}\left( \lambda _{2}\left( \beta \left( \log
\gamma \left( \frac{d}{1-R_{n}}\right) \right) \right) \right) \right\} \leq
\left( \left( \frac{d}{1-R_{n}}\right) ^{2+2\varepsilon }T\left(
R_{n},f\right) \right) ^{k}+ \\
& +\left( \left( \left( \frac{d}{1-R_{n}}\right) ^{2+2\varepsilon }T\left(
R_{n},f\right) \right) ^{k-1}+\cdots +\left( \frac{d}{1-R_{n}}\right)
^{2+2\varepsilon }T\left( R_{n},f\right) \right) \times \\
& \times \exp ^{[2]}\left\{ \alpha ^{-1}\left( \lambda _{1}\left( \beta
\left( \log \gamma \left( \frac{d}{1-R_{n}}\right) \right) \right) \right)
\right\} \leq \\
& \leq k\left( \left( \frac{d}{1-R_{n}}\right) ^{2+2\varepsilon }T\left(
R_{n},f\right) \right) ^{k}\exp ^{[2]}\left\{ \alpha ^{-1}\left( \lambda
_{1}\left( \beta \left( \log \gamma \left( \frac{d}{1-R_{n}}\right) \right)
\right) \right) \right\} \\
& \leq k\left( \left( \frac{d}{1-R_{n}}\right) ^{2+2\varepsilon }\exp
^{[2]}\left\{ \alpha ^{-1}\left( \lambda _{1}\left( \beta \left( \log \gamma
\left( \frac{1}{1-R_{n}}\right) \right) \right) \right) \right\} \right)
^{k}\times \\
& \times \exp ^{[2]}\left\{ \alpha ^{-1}\left( \lambda _{1}\left( \beta
\left( \log \gamma \left( \frac{d}{1-R_{n}}\right) \right) \right) \right)
\right\} \\
& \leq \left( \exp ^{[2]}\left\{ \alpha ^{-1}\left( (\lambda
_{1}+\varepsilon )\left( \beta \left( \log \gamma \left( \frac{d}{1-R_{n}}%
\right) \right) \right) \right) \right\} \right) ^{k+2} \\
& \leq \exp ^{[2]}\left\{ \alpha ^{-1}\left( (\lambda _{1}+2\varepsilon
)\left( \beta \left( \log \left( \gamma \left( \frac{d}{1-R_{n}}\right)
\right) \right) \right) \right) \right\} ,~~R_{n}\in I\setminus
F,~R_{n}\rightarrow 1^{-}.
\end{align*}%
By arbitrariness of $\varepsilon >0$ and the monotony of the function $%
\alpha ^{-1}$, we obtain that $\lambda _{1}\geq \lambda _{2}$. This
contradiction proves the inequality $\varrho _{0}\leq \varrho _{f}$.\newline
Second, we prove that $\varrho _{0}\geq \varrho _{f}$. By using Lemma \ref%
{lem3.9}, we obtain
\begin{equation*}
\varrho _{f}=\varrho _{(\alpha (\log ),\beta ,\gamma ),M}[f]\leq \max
\left\{ \varrho _{(\alpha ,\beta ,\gamma ),M}[A_{j}],j=0,...,k-1\right\}
=\varrho _{(\alpha ,\beta ,\gamma ),M}[A_{0}]=\varrho _{0}.
\end{equation*}%
From the last inequality and $\varrho _{0}\leq \varrho _{f}$ we obtain $%
\varrho _{f}=\varrho _{0}$.\newline
\end{proof}

\textbf{Proof of Theorem \ref{teo2.2}}{\ }

\begin{proof}
Suppose that $f$ is a nontrivial solution of $\left( \ref{1.1}\right) $.
From $\left( \ref{1.1}\right) $, we can write that
\begin{equation}
|A_{0}(z)|\leq \left\vert \frac{f^{(k)}(z)}{f(z)}\right\vert
+|A_{k-1}(z)|\left\vert \frac{f^{(k-1)}(z)}{f(z)}\right\vert +\cdots
+|A_{1}(z)|\left\vert \frac{f^{\prime }(z)}{f(z)}\right\vert  \label{4.6}
\end{equation}%
If%
\begin{equation*}
\max \{\varrho _{(\alpha ,\beta ,\gamma ),M}[A_{j}],j=1,...,k-1\}<\varrho
_{(\alpha ,\beta ,\gamma ),M}[A_{0}]=\varrho _{0}<+\infty ,
\end{equation*}%
then by Theorem \ref{teo2.1}, we obtain that
\begin{equation*}
\varrho _{(\alpha (\log ),\beta ,\gamma ),M}[f]=\varrho _{(\alpha ,\beta
,\gamma ),M}[A_{0}].
\end{equation*}%
Suppose that
\begin{equation*}
\max \left\{ \varrho _{(\alpha ,\beta ,\gamma
),M}[A_{j}],j=1,...,k-1\right\} =\varrho _{(\alpha ,\beta ,\gamma
),M}[A_{0}]=\varrho _{0}<+\infty
\end{equation*}%
and
\begin{equation*}
\max \{\tau _{(\alpha ,\beta ,\gamma ),M}[A_{j}]:\varrho _{(\alpha ,\beta
,\gamma ),M}[A_{j}]=\varrho _{(\alpha ,\beta ,\gamma ),M}[A_{0}]>0\}<\tau
_{(\alpha ,\beta ,\gamma ),M}[A_{0}]=\tau _{M}<+\infty .
\end{equation*}%
First, we prove that
\begin{equation*}
\varrho _{f}=\varrho _{(\alpha (\log ),\beta ,\gamma ),M}[f]\geq \varrho
_{(\alpha ,\beta ,\gamma ),M}[A_{0}]=\varrho _{0}
\end{equation*}%
By assumptions, there exists a set $K\subseteq \{1,2,...,k-1\}$ such that
\begin{equation*}
\varrho _{(\alpha ,\beta ,\gamma ),M}[A_{j}]=\varrho _{(\alpha ,\beta
,\gamma ),M}[A_{0}]=\varrho _{0},\text{ }\tau _{(\alpha ,\beta ,\gamma
),M}[A_{j}]<\tau _{(\alpha ,\beta ,\gamma ),M}[A_{0}],\text{ }j\in K
\end{equation*}%
and
\begin{equation*}
\varrho _{(\alpha ,\beta ,\gamma ),M}[A_{j}]<\varrho _{(\alpha ,\beta
,\gamma ),M}[A_{0}]=\varrho _{0},\text{ }j\in \{1,2,...,k-1\}\setminus K
\end{equation*}%
Thus, we choose $\lambda _{3}$ and $\lambda _{4}$ satisfying
\begin{equation*}
\max \{\tau _{(\alpha ,\beta ,\gamma ),M}[A_{j}]:j\in K\}<\lambda
_{3}<\lambda _{4}<\tau _{(\alpha ,\beta ,\gamma ),M}[A_{0}]=\tau _{M}.
\end{equation*}%
For $r\rightarrow 1^{-},$ we have
\begin{equation}
|A_{j}(z)|\leq \exp ^{[2]}\left\{ \alpha ^{-1}\left( \log \left( \lambda
_{3}\left( \exp \left( \beta \left( \log \gamma \left( \frac{1}{1-r}\right)
\right) \right) \right) ^{\varrho _{0}}\right) \right) \right\} ,~j\in K
\label{4.7}
\end{equation}%
and%
\begin{align}
& |A_{j}(z)|\leq \exp ^{[2]}\left\{ \alpha ^{-1}\left( \log \left( \exp
\left( \beta \left( \log \gamma \left( \frac{1}{1-r}\right) \right) \right)
\right) ^{\xi }\right) \right\}  \notag \\
& \leq \exp ^{[2]}\left\{ \alpha ^{-1}\left( \log \left( \lambda _{3}\left(
\exp \left( \beta \left( \log \gamma \left( \frac{1}{1-r}\right) \right)
\right) \right) ^{\varrho _{0}}\right) \right) \right\} ~,\text{ }j\in
\{1,2,...,k-1\}\setminus K,  \label{4.8}
\end{align}%
where $0<\xi <\varrho _{0}$. By Lemma \ref{3.7}, there exists a set $%
I_{1}\subset (1,+\infty )$ with infinite logarithmic measure, such that for
all $r\in I_{1}$, there holds
\begin{equation}
|A_{0}(z)|>\exp ^{[2]}\left\{ \alpha ^{-1}\left( \log \left( \lambda
_{4}\left( \exp \left( \beta \left( \log \gamma \left( \frac{1}{1-r}\right)
\right) \right) \right) ^{\varrho _{0}}\right) \right) \right\} .
\label{4.9}
\end{equation}%
By $\left( \ref{4.5}\right) ,$ the following estimate holds for $j=1,...,k$
\begin{equation}
\left\vert \frac{f^{(j)}(z)}{f(z)}\right\vert \leq \left( \left( \frac{1}{%
1-|z|}\right) ^{2+2\varepsilon }T\left( s(|z|),f\right) \right)
^{j},~~~|z|\notin F,  \label{4.10}
\end{equation}%
where $F\subset \lbrack 0,1)$ is a set of finite logarithmic measure. By $%
\left( \ref{4.10}\right) $ and the Definition \ref{d1.2} of $(\alpha (\log
),\beta ,\gamma )$-order of $f,$ we get%
\begin{equation*}
\left\vert \frac{f^{(j)}(z)}{f(z)}\right\vert \leq \left( \left( \frac{1}{%
1-|z|}\right) ^{2+2\varepsilon }T\left( s(|z|),f\right) \right) ^{j}
\end{equation*}%
\begin{equation}
\leq \left( \exp ^{[2]}\left\{ \alpha ^{-1}\left( (\varrho _{f}+\varepsilon
)\beta \left( \log \left( \gamma \left( \frac{1}{1-s(|z|)}\right) \right)
\right) \right) \right\} \right) ^{k+1},~j=1,...,k.  \label{4.11}
\end{equation}%
Since $I_{1}\setminus F$ is of infinite logarithmic measure, there exists a
sequence $\left\{ z_{n}\right\} $ with $|z_{n}|=r_{n}\in I_{1}\setminus F$
such that $r_{n}\rightarrow 1^{-}$. Setting $s(|z_{n}|)=R_{n}$, we obtain $%
1-|z_{n}|=\frac{1}{d}(1-R_{n}),$ $d\in (0,1)$.\newline
Therefore, by substituting $\left( \ref{4.7}\right) $, $\left( \ref{4.8}%
\right) $, $\left( \ref{4.9}\right) $ and $\left( \ref{4.11}\right) $ into $%
\left( \ref{4.6}\right) $, we obtain, for $R_{n}\in I_{1}\setminus F$ with $%
R_{n}\rightarrow 1^{-}$ that
\begin{equation*}
\exp ^{[2]}\left\{ \alpha ^{-1}\left( \log \left( \lambda _{4}\left( \exp
\left( \beta \left( \log \gamma \left( \frac{d}{1-R_{n}}\right) \right)
\right) \right) ^{\varrho _{0}}\right) \right) \right\}
\end{equation*}%
\begin{align}
& \leq k\exp ^{[2]}\left\{ \alpha ^{-1}\left( \log \left( \lambda _{3}\left(
\exp \left( \beta \left( \log \gamma \left( \frac{d}{1-R_{n}}\right) \right)
\right) \right) ^{\varrho _{0}}\right) \right) \right\}  \notag  \label{11}
\\
& \times \left[ \exp ^{[2]}\left\{ \alpha ^{-1}\left( (\varrho
_{f}+\varepsilon )\beta \left( \log \gamma \left( \frac{1}{1-R_{n}}\right)
\right) \right) \right\} \right] ^{k+1}  \notag
\end{align}%
\begin{equation}
\leq \exp ^{[2]}\left\{ \alpha ^{-1}\left( \log \left( (\lambda
_{3}+2\varepsilon )\left( \exp \left( \beta \left( \log \gamma \left( \frac{d%
}{1-R_{n}}\right) \right) \right) \right) ^{\varrho _{0}}\right) \right)
\right\} .  \label{4.12}
\end{equation}%
From $\left( \ref{4.12}\right) ,$ we get that $\lambda _{3}\geq \lambda _{4}$%
. This contradiction implies
\begin{equation*}
\varrho _{(\alpha (\log ),\beta ,\gamma ),M}[f]\geq \varrho _{(\alpha ,\beta
,\gamma ),M}[A_{0}].
\end{equation*}%
On the other hand, by Lemma \ref{lem3.9}, we get that
\begin{equation*}
\varrho _{(\alpha (\log ),\beta ,\gamma ),M}[f]\leq \max \left\{ \varrho
_{(\alpha ,\beta ,\gamma ),M}[A_{j}],j=0,...,k-1\right\} =\varrho _{(\alpha
,\beta ,\gamma ),M}[A_{0}].
\end{equation*}%
Hence every nontrivial solution $f$ of $\left( \ref{1.1}\right) $ satisfies $%
\varrho _{(\alpha (\log ),\beta ,\gamma ),M}[f]=\varrho _{(\alpha ,\beta
,\gamma ),M}[A_{0}].$
\end{proof}

\end{document}